\newtheorem{theorem}{Theorem}[section]
\newtheorem{lemma}[theorem]{Lemma}
\newtheorem{proposition}[theorem]{Proposition}
\makeatletter \renewenvironment{proof}[1][\proofname] {\par\pushQED{\qed}\normalfont\topsep6\p@\@plus6\p@\relax\trivlist\item[\hskip\labelsep\bfseries#1\@addpunct{.}]\ignorespaces}{\popQED\endtrivlist\@endpefalse} \makeatother
\newcommand{\diam}{\mbox{\rm diam}}              
\newcommand{\dist}{\mbox{\rm dist}}              
\newcommand{\loc}{\mbox{\rm loc}}
\begin{document}
\title{Local Hausdorff Measure}
\author{John Dever \\
School of Mathematics\\
Georgia Institute of Technology\\
Atlanta GA 30332-0160}

\maketitle
\begin{abstract}
A local Hausdorff dimension is defined on a metric space. We study its properties and use it to define a local Hausdorff measure. We show that in the case that in the local Hausdorff measure is finite we can recover the global Hausdorff dimension from the local one. Lastly, for a variable Ahlfors Q-regular measure on a compact metric space, we show the Ahlfors regular measure is strongly equivalent to the local Hausdorff measure and that the function $Q$ is equal to the local Hausdorff dimension.
\end{abstract}
\noindent \textbf{Mathematical Subject Classification}: 28A78

\noindent \textbf{Keywords:} local hausdorff dimension, local hausdorff measure, variable Ahlfors regularity, metric measure space, multifractal analysis.

\maketitle

\section{Introduction}
Many of the central ingredients in this paper have long been known. The general Carath\'{e}odory construction of metric measures may be found in Federer \cite{Fed}.  The definition of local dimension used in this paper may be found in \cite{Loc}. A definition of a variable Hasdorff measure, akin to the $\lambda^Q$ defined here, may be found in \cite{Sob}. Often local dimension is defined through a measure, what we here call the local dimension of a measure. The latter use is often seen in connection with Multifractal Analysis\cite{Tech}. The equality of $Q$ with the Hausdorff dimension in an Ahlfors $Q$-regular space with constant $Q$ is well known\cite{Hein}. However, neither the equality of a variable $Q$ with the intrinsically defined local Hausdorff measure, nor the strong equivalence of measures, in the case of a compact space, between the local Hausdorff measure and a variable Ahlfors $Q$-regular measure, seem to be known.  The result seems to be of interest since it provides a concept of local dimension that can be defined for any metric space that agrees with the the concept of local dimension often used in multifractal analysis, in the often restrictive case that the latter exists. Moreover, the strong equivalence, in the sense defined below, of the local Hausdorff measure to the variable Ahlfors regular measures provides a concrete realization of such ``multifractal" measures.

The layout of this paper is as follows. We introduce notation and present background information in the remainder of the introduction. We define and investigate properties of the local Hausdorff dimension and local Hausdorff measure and an equivalent local open spherical measure in the next section. Lastly we focus on variable Ahlfors $Q$-regular measures in the case of a compact metric space and connect $Q$ to the local Hausdorff dimension and the $Q$-regular measure to the local Hausdorff measure. 

For $(X,\rho)$ a metric space, we denote the open ball of radius $0\leq r\leq \infty$ about $x\in X$ by $B_r(x)$. We denote the closed ball of radius $r$ by $B_r[x].$
For $A\subset X$ we let $|A|:=\diam(A)=\sup_{[0,\infty]}\{\rho(x,y)\;|\;x,y\in A\}.$ Throughout the paper, let $\mathscr{C}:=\mathscr{P}(X),$ and let $\mathscr{B}$ be the collection of all open balls in $X,$ where $\emptyset = B_0(x)$ and $X=B_\infty(x)$ for 
any $x\in X.$ By a covering class we mean a collection $\mathscr{A}\subset \mathscr{P}(X)$ with $\emptyset, X \in \mathscr{A}.$ We will primarily work with the covering classes $\mathscr{C}=\mathscr{P}(X)$ and $\mathscr{B}.$ For $\mathscr{A}$ a covering class, let $\mathscr{A}_\delta := \{\mathscr{U} \subset \mathscr{A} \;|\;$\mbox{$\mathscr{U}$ at 
countable},\;$ |U|\leq \delta$ \mbox{for}\;$ U\in \mathscr{U},\; A\subset \cup 
\mathscr{U}\}.$ 

Recall an outer measure on a set $X$ is a function $\mu^*:\mathscr{P}(X)\rightarrow [0,\infty]$ such that 
$\mu^*(\emptyset)=0$; if $A,B\subset X$ with $A\subset B$ then $\mu^*(A)\leq \mu^*(B);$ and if 
$(A_i)_{i=1}^\infty \subset \mathscr{P}(X)$ then 
$\mu^*(\cup_{i=1}^\infty) A_i)\leq \sum_{i=1}^\infty 
\mu^*(A_i).$ The second condition is called monotonicity, and the last condition is called countable subadditivity.

A measure $\mu$ on a $\sigma$-algebra $\mathscr{M}$ is called complete if for all $N\in \mathscr{M}$ with $\mu(N)=0,$ $\mathscr{P}(N)\subset \mathscr{M}$.

The following theorem is basic to the subject. See \cite{Folland} for a proof.
\begin{theorem} (Carath\'{e}odory) If $\mu^*$ is an outer measure on $X$ then if $\mathscr{M^*}:=\{A\subset X\;|\;\forall E\subset X\;[\;\mu^*(E)=\mu^*(E\cap A)+\mu^*(E\cap A^c)\;]\;\},$ $\mathscr{M^*}$ is a $\sigma$-algebra and $\mu^*|_{\mathscr{M^*}}$ is a complete measure. 
\end{theorem}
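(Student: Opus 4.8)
The plan is to verify the $\sigma$-algebra axioms for $\mathscr{M}^*$ and then establish countable additivity of $\mu^*$ on it, finishing with completeness. The key simplification throughout is that countable subadditivity already gives $\mu^*(E) \leq \mu^*(E \cap A) + \mu^*(E \cap A^c)$ for every $A, E \subset X$; hence membership in $\mathscr{M}^*$ reduces to verifying only the reverse inequality $\mu^*(E) \geq \mu^*(E \cap A) + \mu^*(E \cap A^c)$.

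First I would show $\mathscr{M}^*$ is an algebra. That $\emptyset \in \mathscr{M}^*$ is immediate, and the defining condition is manifestly symmetric under $A \mapsto A^c$, so $\mathscr{M}^*$ is closed under complementation. For closure under finite unions, given $A, B \in \mathscr{M}^*$ and a test set $E$, I would split $E$ first by $A$ and then split the relevant pieces by $B$, recombining via the identity $A \cup B = A \cup (A^c \cap B)$ together with subadditivity to recover $\mu^*(E) \geq \mu^*(E \cap (A \cup B)) + \mu^*(E \cap (A \cup B)^c)$.

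Next comes the heart of the argument: countable additivity. I would first prove finite additivity on disjoint members of $\mathscr{M}^*$ by testing the measurability of $A$ against the set $E \cap (A \cup B)$ when $A \cap B = \emptyset$, which yields $\mu^*(E \cap (A \cup B)) = \mu^*(E \cap A) + \mu^*(E \cap B)$, and then iterating to get the formula for finite disjoint unions. For a disjoint sequence $(A_n)$ with union $A$, applying this to the partial unions $A^{(N)} = \bigcup_{n \leq N} A_n$ and using monotonicity $\mu^*(E \cap A) \geq \mu^*(E \cap A^{(N)})$ gives, after $N \to \infty$, the bound $\mu^*(E \cap A) \geq \sum_{n=1}^\infty \mu^*(E \cap A_n)$; the reverse inequality is subadditivity. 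Testing against $E = X$ delivers countable additivity, while the same computation confirms $A \in \mathscr{M}^*$, so $\mathscr{M}^*$ is closed under countable disjoint unions.

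Since an algebra closed under countable disjoint unions is a $\sigma$-algebra — any countable union $\bigcup_n A_n$ disjointifies as $\bigcup_n \bigl(A_n \setminus \bigcup_{k<n} A_k\bigr)$, with the summands lying in the algebra — this finishes the proof that $\mathscr{M}^*$ is a $\sigma$-algebra and that $\mu^*|_{\mathscr{M}^*}$ is a measure. For completeness, if $N \in \mathscr{M}^*$ with $\mu^*(N) = 0$ and $A \subset N$, then monotonicity forces $\mu^*(E \cap A) \leq \mu^*(N) = 0$ for every $E$, whence $\mu^*(E \cap A) + \mu^*(E \cap A^c) = \mu^*(E \cap A^c) \leq \mu^*(E)$, placing $A \in \mathscr{M}^*$. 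I expect the single genuine obstacle to be the countable-additivity step, where one must carefully pass from finite disjoint unions to the infinite union through the monotonicity bound before invoking subadditivity; everything else is bookkeeping with the defining splitting identity.
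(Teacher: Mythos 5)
Your proposal is correct, but note that the paper itself gives no proof of this theorem --- it simply cites Folland and moves on. Your argument (reducing membership in $\mathscr{M}^*$ to the single inequality $\mu^*(E)\geq\mu^*(E\cap A)+\mu^*(E\cap A^c)$, closure under complements and finite unions, finite additivity on disjoint measurable sets, the pass to countable disjoint unions via monotonicity on partial unions followed by subadditivity, disjointification, and the completeness check) is exactly the standard Carath\'{e}odory argument found in the cited reference, so there is nothing to correct or to contrast with the paper's treatment.
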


Now suppose $(X,d)$ is a metric space. Sets $A,B\subset X$ are called positively separated if $\dist(A,B)=\inf\{d(x,y)\;|\;x\in A,y\in B\;\}>0.$ An outer measure $\mu^*$ on $X$ is called a metric outer measure if for all $A,B\subset X$ with $A,B$ positively separated, $\mu^*(A\cup B)=\mu^*(A)+\mu^*(B).$ 

The Borel sigma algebra is the smallest $\sigma$-algebra containing the open sets of $X.$ Elements of the Borel $\sigma$-algebra are called Borel sets. A Borel measure is a measure defined on the $\sigma$-algebra of Borel sets. The following proposition is well known. See \cite{Falc1} for a proof.

\begin{proposition} 
If $\mu^*$ is a metric outer measure on a metric space $X,$ then $\mathscr{M}^*$ contains the $\sigma$-algebra of Borel sets. In particular, $\mu^*$ may be restricted to a Borel measure. 
\end{proposition}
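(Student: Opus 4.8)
The plan is to leverage the Carath\'{e}odory construction already established: Theorem~1.1 guarantees that $\mathscr{M}^*$ is a $\sigma$-algebra, and the Borel $\sigma$-algebra is by definition the smallest $\sigma$-algebra containing the open sets. Since the closed sets generate the Borel $\sigma$-algebra just as well as the open sets, it therefore suffices to show that every closed set $C\subset X$ belongs to $\mathscr{M}^*$. To test membership of $C$, I only need to verify the inequality $\mu^*(E)\geq \mu^*(E\cap C)+\mu^*(E\cap C^c)$ for every $E\subset X$, because the reverse inequality is automatic from countable subadditivity; and I may assume $\mu^*(E)<\infty$, the case $\mu^*(E)=\infty$ being trivial.

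First I would exploit that $C$ is closed in order to approximate $E\cap C^c$ from the inside by sets positively separated from $C$. Setting $B_n:=\{x\in E : \dist(x,C)\geq 1/n\}$, each $B_n$ is positively separated from $E\cap C\subset C$, so the metric-outer-measure property together with monotonicity gives $\mu^*(E)\geq \mu^*(B_n\cup(E\cap C))=\mu^*(B_n)+\mu^*(E\cap C)$. Because $C$ is closed, $\dist(x,C)>0$ for every $x\in C^c$, so the increasing sequence $(B_n)$ exhausts $E\cap C^c$, that is, $E\cap C^c=\bigcup_n B_n$. The whole argument then reduces to the continuity statement $\lim_n \mu^*(B_n)=\mu^*(E\cap C^c)$.

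The hard part will be exactly this limit, since $\mu^*$ need not be additive on pieces that are not positively separated. I would handle it through the annular differences $D_n:=B_{n+1}\setminus B_n=\{x\in E : 1/(n+1)\leq \dist(x,C)<1/n\}$. Using that $\dist(\cdot,C)$ is $1$-Lipschitz, one checks that $\dist(D_n,D_m)\geq 1/(n+1)-1/(n+2)>0$ whenever $|n-m|\geq 2$, so the even-indexed family $(D_{2k})$ and the odd-indexed family $(D_{2k+1})$ are each pairwise positively separated. Applying the metric property finitely many times (by induction, noting that a set positively separated from each of finitely many sets is positively separated from their union) bounds every partial sum $\sum_{k\leq N}\mu^*(D_{2k})$ and $\sum_{k\leq N}\mu^*(D_{2k+1})$ by $\mu^*(E)<\infty$, whence the full series $\sum_n \mu^*(D_n)$ converges.

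Finally I would close the argument. From $E\cap C^c=B_n\cup\bigcup_{k\geq n}D_k$ and countable subadditivity, $\mu^*(E\cap C^c)\leq \mu^*(B_n)+\sum_{k\geq n}\mu^*(D_k)$, while $\mu^*(B_n)\leq \mu^*(E\cap C^c)$ by monotonicity. Since the convergence of the series forces the tail $\sum_{k\geq n}\mu^*(D_k)\to 0$, letting $n\to\infty$ yields $\mu^*(B_n)\to\mu^*(E\cap C^c)$. Combined with the inequality $\mu^*(E)\geq \mu^*(B_n)+\mu^*(E\cap C)$ obtained above, this gives $\mu^*(E)\geq \mu^*(E\cap C^c)+\mu^*(E\cap C)$, so $C\in\mathscr{M}^*$. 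As the closed sets generate the Borel $\sigma$-algebra and $\mathscr{M}^*$ is a $\sigma$-algebra, the Borel $\sigma$-algebra is contained in $\mathscr{M}^*$, and $\mu^*$ restricts to a Borel measure.
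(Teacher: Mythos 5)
Your proof is correct. Note that the paper itself does not prove this proposition at all---it states it as well known and cites \cite{Falc1}---so there is no in-paper argument to compare against; what you have written is precisely the classical proof found in that reference (and in Federer): reduce to showing closed sets satisfy the Carath\'{e}odory criterion, approximate $E\cap C^c$ from inside by the sets $B_n$ at distance $\geq 1/n$ from $C$, and recover $\lim_n \mu^*(B_n)=\mu^*(E\cap C^c)$ via the alternating annuli $D_n$, whose even- and odd-indexed subfamilies are pairwise positively separated so that $\sum_n\mu^*(D_n)\leq 2\mu^*(E)<\infty$. All the steps check out, including the two points where care is needed: the separation estimate $\dist(D_n,D_m)\geq 1/(n+1)-1/(n+2)$ for $|n-m|\geq 2$, and the induction showing finite additivity over pairwise positively separated sets (using that a set positively separated from each of finitely many sets is positively separated from their union).
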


For $\tau:\mathscr{C}\rightarrow [0,\infty]$ with $\tau(\emptyset)=0,$
let $\mu^* _{\phi, \delta} (A):=\inf \{ \sum_{U\in \mathscr{U}}\tau(U)\;|\; 
\mathscr{U}\in \mathscr{C}_\delta(A)\}$ and $\mu^* _{\tau}(A)=\sup_{\delta>0} \mu^* _{\tau, \delta} 
(A).$ 
The verification of the following proposition is straightforward. 
\begin{proposition} $\mu^* _{\tau}$ is a metric outer measure. \end{proposition}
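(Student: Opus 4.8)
The plan is to treat this as a standard Method~II (Carath\'eodory) construction and to verify the defining properties in two stages: first that each $\mu^*_{\tau,\delta}$ is an outer measure, and then that passing to the supremum over $\delta$ both preserves this and supplies the metric property. Throughout I read $\phi$ in the displayed definition as the typo it appears to be and work with $\tau$.

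First I would fix $\delta>0$ and check that $\mu^*_{\tau,\delta}$ is an outer measure. Since $\{\emptyset\}\in\mathscr{C}_\delta(\emptyset)$ and $\tau(\emptyset)=0$, we get $\mu^*_{\tau,\delta}(\emptyset)=0$. Monotonicity is immediate from the inclusion $\mathscr{C}_\delta(B)\subseteq\mathscr{C}_\delta(A)$ whenever $A\subseteq B$, since any countable small-diameter cover of $B$ already covers $A$. For countable subadditivity, given $(A_i)$ and $\varepsilon>0$, I would choose for each $i$ a cover $\mathscr{U}_i\in\mathscr{C}_\delta(A_i)$ with $\sum_{U\in\mathscr{U}_i}\tau(U)\le\mu^*_{\tau,\delta}(A_i)+\varepsilon 2^{-i}$ (assuming each term is finite, else the bound is trivial); the countable union $\bigcup_i\mathscr{U}_i$ is a countable cover of $\bigcup_i A_i$ by sets of diameter at most $\delta$, and summing yields the claim after letting $\varepsilon\to 0$.

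Next I would observe that $\delta\mapsto\mu^*_{\tau,\delta}(A)$ is nonincreasing, because shrinking $\delta$ only removes admissible covers and hence can only raise the infimum; consequently $\mu^*_\tau(A)=\sup_{\delta>0}\mu^*_{\tau,\delta}(A)=\lim_{\delta\downarrow 0}\mu^*_{\tau,\delta}(A)$. A pointwise supremum of outer measures is again an outer measure: the empty-set and monotonicity conditions pass through the supremum at once, and for subadditivity one has $\mu^*_\tau(\bigcup_i A_i)=\sup_\delta\mu^*_{\tau,\delta}(\bigcup_i A_i)\le\sup_\delta\sum_i\mu^*_{\tau,\delta}(A_i)\le\sum_i\mu^*_\tau(A_i)$. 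This establishes that $\mu^*_\tau$ is an outer measure.

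Finally, for the metric property let $A,B$ be positively separated with $\dist(A,B)=d>0$; subadditivity already gives $\mu^*_\tau(A\cup B)\le\mu^*_\tau(A)+\mu^*_\tau(B)$, so only the reverse inequality needs work. The key point, and the step I expect to carry the real content, is that for any $0<\delta<d$ no set $U$ with $|U|\le\delta$ can meet both $A$ and $B$. Hence, given any $\mathscr{U}\in\mathscr{C}_\delta(A\cup B)$, I would partition its members into those meeting $A$ and those meeting $B$ (discarding any meeting neither, which is harmless since $\tau\ge 0$); these subfamilies lie in $\mathscr{C}_\delta(A)$ and $\mathscr{C}_\delta(B)$ respectively and are disjoint, so $\sum_{U\in\mathscr{U}}\tau(U)\ge\mu^*_{\tau,\delta}(A)+\mu^*_{\tau,\delta}(B)$. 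Taking the infimum over $\mathscr{U}$ gives $\mu^*_{\tau,\delta}(A\cup B)\ge\mu^*_{\tau,\delta}(A)+\mu^*_{\tau,\delta}(B)$ for all $\delta<d$, and letting $\delta\downarrow 0$ yields $\mu^*_\tau(A\cup B)\ge\mu^*_\tau(A)+\mu^*_\tau(B)$. The only subtleties to handle are the bookkeeping with infinite values and the verification that the two split subfamilies genuinely cover $A$ and $B$; everything else is routine.
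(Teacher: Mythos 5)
Your proof is correct. The paper offers no proof of this proposition at all---it simply declares the verification straightforward---and your argument is exactly the standard Carath\'eodory (Method II) verification it has in mind: each $\mu^*_{\tau,\delta}$ is an outer measure, the supremum over $\delta$ (equivalently the limit, by monotonicity in $\delta$) preserves this, and splitting a cover of $A\cup B$ with mesh $\delta<\dist(A,B)$ into the sets meeting $A$ and those meeting $B$ yields the metric property.
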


Note we may restrict to any covering class $\mathscr{A}$ containing $\emptyset$ and $X$ by setting $\tau(U)=\infty$ for $U\in \mathscr{C}\setminus \mathscr{A}.$

It then follows that the $\mu^* _{\tau}$ measurable sets contain the Borel sigma algebra. Let $\mu_\tau$ be the restriction of $\mu^* _{\tau}$ to the Borel sigma algebra. Then by Caratheodory's Theorem, $\mu_\tau$ is a Borel measure on $X.$ 

For $s\geq 0$ let $H^s$ be the measure obtained from the choice $\tau(U)=|U|^s$ for $U\neq \emptyset$ and $\tau(\emptyset)=0.$ $H^s$ is called the $s$-dimensional Hausdorff measure. Let $\lambda^s$ be the measure obtained by restricting $\tau$ to the smaller covering class $\mathscr{B}$ of open balls. Concretely, let $\lambda^s$ be the measure obtained by setting $\tau(B)=|B|^s$ for $B$ a non-empty open ball, $\tau(\emptyset)=0,$ and $\tau(U)=\infty$ otherwise. We call $\lambda^s$ the $s$-dimensional open spherical measure. 

We call Borel measures $\mu,\nu$ on $X$ strongly equivalent, written $\mu \simeq \nu$, if there exists a constant $C>0$ such that for every Borel set $E$, $\frac{1}{C}\nu(E)\leq \mu(E)\leq C\nu(E).$ 

The proof of the following lemma, while straightforward, is included for completeness.
\begin{lemma} For any $s\geq 0,$ $\lambda^s \simeq H^s.$ 
\end{lemma}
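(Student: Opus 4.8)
The plan is to establish the strong equivalence $\lambda^s \simeq H^s$ by proving the two-sided bound with an explicit comparison constant that is in fact $C=1$ in one direction and trivial in the other. The key observation is that the two measures arise from the same Carath\'{e}odory construction with the same set function $\tau(U)=|U|^s$, differing only in the covering class: $H^s$ allows arbitrary covers from $\mathscr{C}=\mathscr{P}(X)$, while $\lambda^s$ restricts to covers by open balls from $\mathscr{B}$. Since $\mathscr{B}\subset \mathscr{C}$, every admissible $\delta$-cover counted by $\lambda^s_\delta$ is also counted by $H^s_\delta$, so the infimum defining $H^s$ is taken over a larger class. This immediately gives $H^s(E)\leq \lambda^s(E)$ for every $E$, since a smaller covering class can only increase (or leave fixed) the infimum.

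For the reverse inequality, the plan is to show that any set can be replaced by an open ball of comparable diameter at negligible cost in the exponent $s$. The standard device is: given a nonempty set $U\subset X$ with $|U|=d<\infty$, fix any point $x\in U$ and observe that $U\subset B_{d+\epsilon}(x)$ for every $\epsilon>0$, an open ball whose diameter satisfies $|B_{d+\epsilon}(x)|\leq 2(d+\epsilon)$. Thus given any countable cover $\mathscr{U}\in \mathscr{C}_\delta(E)$ witnessing $H^s_\delta(E)$ up to a chosen tolerance, I would enlarge each $U$ to a concentric open ball $B_U$ with $|B_U|\leq 2|U|+\text{(small)}$, producing a ball cover of $E$. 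This yields
\begin{equation}
\sum_{U}\tau(B_U)=\sum_U |B_U|^s \leq 2^s \sum_U |U|^s + (\text{small}),
\end{equation}
whence $\lambda^s_{2\delta+\epsilon}(E)\leq 2^s H^s_\delta(E)+\text{small}$. Taking $\delta\to 0$ and $\epsilon\to 0$ gives $\lambda^s(E)\leq 2^s H^s(E)$. Combining the two directions yields $H^s(E)\leq \lambda^s(E)\leq 2^s H^s(E)$, so $C=2^s$ works and $\lambda^s\simeq H^s$.

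The main technical care is in the diameter bookkeeping for the ball enlargement. The bound $|B_r(x)|\leq 2r$ is immediate from the triangle inequality, but one must handle the degenerate and boundary cases cleanly: sets $U$ with $|U|=0$ (singletons or $\emptyset$), the convention $|B_0(x)|=\emptyset$ contributing zero, and ensuring the enlarged balls still have diameter $\leq 2\delta+\epsilon$ so they qualify for the $\lambda^s$-cover at the correct scale. One must also track that the slack $\epsilon$ can be distributed as $\epsilon/2^k$ over the countably many sets so that the aggregate extra contribution is controlled and vanishes in the limit. None of these steps is deep; the only genuine subtlety is that the comparison constant degrades by the factor $2^s$ coming from the diameter doubling, which is exactly why the measures are only \emph{strongly} equivalent rather than equal. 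I expect this diameter-doubling step to be the principal (though routine) obstacle, and the forward inequality $H^s\leq \lambda^s$ to be essentially free.
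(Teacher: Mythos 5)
Your proposal is correct and follows essentially the same route as the paper's proof: the inequality $H^s\leq \lambda^s$ is free from the inclusion $\mathscr{B}\subset\mathscr{C}$, and the reverse inequality is obtained by enlarging each covering set to an open ball centered at one of its points, with summable slack absorbing the degenerate (zero-diameter) sets. The only differences are cosmetic: your radius $|U|+\epsilon$ gives the sharper constant $2^s$ where the paper's choice $r_U=2|U|$ gives $4^s$, and your per-set slack $\epsilon/2^k$ handles $s=0$ uniformly where the paper treats it as a separate counting-measure case.
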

\begin{proof} It is clear that $H^s \leq \lambda^s.$ Conversely, let $A$ be a Borel set. We may assume $A\neq \emptyset.$ Also, we may assume $H^s(A)<\infty,$ since 
otherwise the reverse inequality is clear. If $s=0$ then $H^0$ is the counting measure. So let $H^0(A)=n<\infty.$ Let $x_1,...,x_n$ be an enumeration of the elements of $A.$ Let $r$ be the minimum distance between distinct elements of $A.$ For $0<\delta<r$ let $B_i = 
B_{\frac{\delta}{2}}(x_i)$ for each $i.$ Then $(B_i)_{i=1}^n \in \mathscr{B}_\delta(A)$ 
and $\lambda^{0,*}_\delta(A)\leq n=H^0(A)$. Hence $\lambda^0(A) \leq H^0(A).$ So we may 
assume $s>0.$  Let $\epsilon>0.$ Let $\delta>0$ and let $\mathscr{U}\in 
\mathscr{C}_\delta(A)$ with $\sum_{U\in \mathscr{U}} |U|^s <\infty.$ We may assume $U\neq 
\emptyset$ for each $U\in \mathscr{U}.$ Choose $x_U\in U$ for each $U\in \mathscr{U}.$ 
Let $\mathscr{U}_0=\{U\in \mathscr{U}\;|\;|U|=0\}, \mathscr{U}_1:=\{U\in \mathscr{U}\;|
\;|U|>0\}.$ Then for each $U\in \mathscr{U}_0$ choose $0<r_U<\delta$ such that 
$\sum_{U\in \mathscr{U}_0} r_U^s<\frac{\epsilon}{2^s}.$ For $U\in \mathscr{U}_1$ let $r_U = 2|U|.$ Then let $B_U:=B_{r_U}(x_U)$ for $U\in \mathscr{U}.$ It follows that 
$(B_U)_{U\in \mathscr{U}}\in \mathscr{B}_{4\delta}(A)$ and $\lambda^{s,*}_{4\delta}
(A)\leq \sum_{U\in \mathscr{U}}|B_U|^s \leq 2^s(\sum_{U\in \mathscr{U}_0} r_U^s + 
\sum_{U\in \mathscr{U}_1} r_U^s) \leq \epsilon + 4^s\sum_{U\in \mathscr{U}} |U|^s.$ Hence 
$\lambda^s(A)\leq 4^sH^s(A).$ \end{proof}

Let $X$ be a metric space and $A\subset X$.  Let $0\leq t<s.$ Then if $(U_i)_{i=1}^\infty \in \mathscr{C}_\delta(A)$ then $H^s_\delta(A)\leq \sum_i |U_i|^s \leq \delta^{s-t}\sum_i |U_i|^t.$ So $H^s_\delta(A) \leq \delta^{s-t} H_\delta^t(A)$ for all $\delta>0$.

Suppose $H^t(A)<\infty$. Then since $\delta^{t-s}\overrightarrow{_{_{\delta \rightarrow 0^+}}} 0,$ $H^s(A)=0.$ Similarly, if $H^s(A)>0$ and $t<s,$ $H^t(A)=\infty.$ It follows $\sup\{s\geq 0\;|\;H^s(X)=\infty\}=\inf\{s\geq 0\;|\;H^s(X)=0\}.$ We denote the common number in $[0,\infty]$ by $\dim(A).$ It is called the Hausdorff dimension of $X.$ Since $H^s\simeq \lambda^s,$ we also have $\dim(A)= \sup\{s\geq 0\;|\;\lambda^s(A)=\infty\}=\inf\{s\geq 0\;|\;\lambda^s(A)=0\}.$

\section{Local dimension and measure}
\begin{lemma} If $A\subset B \subset X$ then $\dim(A)\leq \dim(B).$\end{lemma}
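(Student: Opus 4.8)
The plan is to prove monotonicity of Hausdorff dimension directly from the monotonicity of the Hausdorff measures $H^s$ in the set argument, together with the characterization of $\dim$ as a supremum established just before the statement. Since the excerpt defines $\dim(A)=\sup\{s\geq 0\;|\;H^s(A)=\infty\}$, the whole argument reduces to showing that $H^s$ is monotone in its set argument and then comparing the two defining sets.

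First I would recall that $\mu^*_\tau$, and in particular $H^s=\mu^*_{\tau}$ for the choice $\tau(U)=|U|^s$, is an outer measure by Proposition (the one asserting $\mu^*_\tau$ is a metric outer measure), and outer measures are monotone by definition. Hence $A\subset B$ immediately gives $H^s(A)\leq H^s(B)$ for every $s\geq 0$. One could alternatively see this at the level of coverings: any cover of $B$ in $\mathscr{C}_\delta(B)$ is also a cover of $A$, so $H^s_\delta(A)\leq H^s_\delta(B)$ and the inequality survives taking the supremum over $\delta$; but invoking monotonicity of the outer measure is cleaner.

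Next I would translate this into a statement about the defining sets. Fix any $s\geq 0$ with $H^s(A)=\infty$. Then $H^s(B)\geq H^s(A)=\infty$, so $H^s(B)=\infty$ as well. Therefore $\{s\geq 0\;|\;H^s(A)=\infty\}\subset\{s\geq 0\;|\;H^s(B)=\infty\}$. Taking suprema of these two sets of reals preserves the inclusion, so $\dim(A)=\sup\{s\;|\;H^s(A)=\infty\}\leq \sup\{s\;|\;H^s(B)=\infty\}=\dim(B)$, which is exactly the claim.

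There is no real obstacle here; the only point requiring a moment's care is the edge case where one or both sets is empty, which must not break the supremum comparison. If $\{s\;|\;H^s(A)=\infty\}=\emptyset$ then $\dim(A)=\sup\emptyset=0\leq\dim(B)$ trivially, so the inclusion-of-sets argument still yields the conclusion since $0$ is the least possible value of $\dim$. I would state the proof in two lines, citing monotonicity of the outer measure $H^s$ and the supremum characterization of $\dim$, and note the empty-set case in passing.
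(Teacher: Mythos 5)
Your proof is correct and is essentially the paper's own argument: both rest on monotonicity of the outer measure $H^s$ applied to $A\subset B$, followed by comparing the defining sets of $\dim$. The only cosmetic difference is that you use the supremum characterization $\dim(A)=\sup\{s\geq 0\;|\;H^s(A)=\infty\}$ while the paper uses the equivalent infimum characterization $\dim(A)=\inf\{s\geq 0\;|\;H^s(A)=0\}$ (noting $H^s(B)=0\Rightarrow H^s(A)=0$); since the paper has already established these coincide, the two arguments are interchangeable.
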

\begin{proof} By monotonicity of measure, $H^s(A)\leq H^s(B)$. So $H^s(B)=0$ implies 
$H^s(A)=0.$ Therefore $\dim(A)=\inf\{s\geq 0\;|\;H^s(A)=0\}\leq \inf\{s\geq 0\;|
\;H^s(B)=0\}=\dim(B).$\end{proof} 

Let $\mathscr{O}(X)$ be the collection of open subsets of $X.$ For $x\in X,$ let $\mathscr{N}(x)$ be the open neighborhoods of $x.$ Define $\dim_{\loc}:X\rightarrow [0,\infty]$ by $\dim_{\loc}(x):=\inf \{\dim(U)\;|\; U\in 
\mathscr{N}(x)\}.$  By Lemma 2.1, $\dim_{\loc}(x)=\inf\{\dim(B_\epsilon(x))\; | \;\epsilon>0\}.$ 

\begin{theorem} $\dim_{\loc}$ is upper semicontinuous. In particular, it is Borel measurable. 
\end{theorem}
\begin{proof} Let $c\geq 0.$ If $c=0$ then $\dim_{\loc}^{-1}([0,c))=\emptyset\in\mathscr{O}
(X)$. So let $c>0.$ Then suppose $x \in \dim_{\loc}^{-1}([0,c)).$ Then there exists a $U\in 
\mathscr{N}(x)$ such that $\dim(U)<c.$ Then for $y\in U$, since $U$ is open there exists a $V\in \mathscr{N}(y)$ with $V\subset U.$ So $\dim_{\loc}(y)\leq \dim(V)\leq \dim(U)<c.$ Hence $x\in U \subset \dim_{\loc}^{-1}([0,c)).$ Therefore $\dim_{\loc}^{-1}([0,c))$ is open.
 
\end{proof}

\begin{lemma} If $A$ is a Borel set and $0\leq s_1\leq s_2$ then $\lambda^{s_2}(A)\leq \lambda^{s_1}(A).$ 
\end{lemma}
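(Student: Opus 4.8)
The plan is to prove the inequality scale by scale and then pass to the limit $\delta\to 0^+$. The essential observation is elementary: for a real number $x$ with $0\leq x\leq 1$ and exponents $0\leq s_1\leq s_2$ one has $x^{s_2}\leq x^{s_1}$ (using the conventions $0^0=1$ and $0^s=0$ for $s>0$ implicit in the definition of $H^0$ as counting measure). Indeed, for $0<x\leq 1$ this is just $x^{s_2}=x^{s_1}x^{s_2-s_1}\leq x^{s_1}$ since $x^{s_2-s_1}\leq 1$, and the boundary cases are immediate. The whole point is that this monotonicity in the exponent is available \emph{only} for sets of diameter at most $1$, which is precisely the regime that governs $\lambda^s$ in the limit of fine covers.

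First I would record the dependence of the pre-measures $\lambda^{s,*}_\delta$ on $\delta$. Since $\mathscr{B}_\delta(A)\subset\mathscr{B}_{\delta'}(A)$ whenever $\delta\leq\delta'$, the infimum defining $\lambda^{s,*}_\delta(A)$ is taken over a smaller family as $\delta$ decreases, so $\delta\mapsto\lambda^{s,*}_\delta(A)$ is nonincreasing in $\delta$. Consequently $\lambda^s(A)=\sup_{\delta>0}\lambda^{s,*}_\delta(A)=\sup_{0<\delta\leq 1}\lambda^{s,*}_\delta(A)$, since restricting the supremum to $\delta\leq 1$ changes nothing: the quantity only grows as $\delta$ shrinks, so contributions from $\delta>1$ never increase the supremum.

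Next I would fix $0<\delta\leq 1$ and an arbitrary admissible cover $\mathscr{U}\in\mathscr{B}_\delta(A)$. Every $U\in\mathscr{U}$ is an open ball with $|U|\leq\delta\leq 1$, so the pointwise inequality yields $|U|^{s_2}\leq|U|^{s_1}$ for each $U$, and summing gives $\sum_{U\in\mathscr{U}}|U|^{s_2}\leq\sum_{U\in\mathscr{U}}|U|^{s_1}$. Taking the infimum over all such covers $\mathscr{U}$ produces $\lambda^{s_2,*}_\delta(A)\leq\lambda^{s_1,*}_\delta(A)$ for every $\delta\leq 1$. Finally I would take the supremum over $0<\delta\leq 1$ and invoke the identity from the first step to conclude $\lambda^{s_2}(A)\leq\lambda^{s_1}(A)$.

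The argument is short, and the only genuine subtlety — hence the step I would be most careful about — is the restriction to $\delta\leq 1$. One might worry that $\lambda^{s_2}$ and $\lambda^{s_1}$ fail to be comparable because large balls satisfy the reversed inequality $|U|^{s_2}\geq|U|^{s_1}$; the resolution is exactly that such balls are irrelevant in the limit $\delta\to 0^+$, which is why rewriting $\lambda^s(A)$ as a supremum over $\delta\leq 1$ in the first step is the linchpin of the proof. I would also dispose of the trivial case $s_1=s_2$ and check the conventions for the empty set and for degenerate (singleton) balls when $s_1=0$, but these are routine.
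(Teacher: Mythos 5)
Your proof is correct and follows essentially the same route as the paper: both rest on the observation that $|U|^{s_2}\leq |U|^{s_1}$ once covers are restricted to diameters at most $1$ (the paper writes this as $|U|^{s_2}\leq \delta^{s_2-s_1}|U|^{s_1}\leq |U|^{s_1}$ for $\delta<1$), then sum, take infima over covers, and pass to the supremum in $\delta$. Your write-up merely makes explicit the monotonicity of $\delta\mapsto\lambda^{s,*}_\delta(A)$ and the reduction to $\delta\leq 1$, which the paper compresses into ``the result follows.''
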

\begin{proof} If $\mathscr{U}\in \mathscr{B}_\delta(A)$ and $U\in \mathscr{U}$ then diam$(U)^{s_2}\leq \delta^{s_2-s_1}$diam$(U)^{s_1}$. Hence diam$(U)^{s_2}\leq $diam$(U)^{s_1}$ for $0<\delta<1.$ The result follows.
\end{proof}

For $U\subset X, U\neq \emptyset,$ let $\tau(U)=|U|^{\dim(U)}.$ Set $\tau(\emptyset)=0.$ Then $H_{\loc}:=\mu_\tau$ is called the local Hausdorff measure. If we restrict $\tau$ to $\mathscr{B}$ then the measure $\lambda_{\loc}:=\mu_\tau$ is called the local open spherical measure. 

\begin{lemma} If $\dim(X)<\infty$ then $H_{\loc} \simeq \lambda_{\loc}.$\end{lemma}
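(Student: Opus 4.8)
The plan is to mimic the proof of the earlier lemma that $\lambda^s \simeq H^s$, but now with the dimension-dependent gauge $\tau(U)=|U|^{\dim(U)}$. The inequality $H_{\loc}\leq \lambda_{\loc}$ should be immediate, since $\mathscr{B}\subset\mathscr{C}$ means every ball-covering is also an arbitrary-set covering, so restricting to the smaller covering class can only increase the infimum. The substance is the reverse inequality $\lambda_{\loc}\leq C\,H_{\loc}$ for a single constant $C$ depending only on the finite number $\dim(X)$.

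The key step is, given an arbitrary cover $\mathscr{U}\in\mathscr{C}_\delta(A)$, to replace each $U\in\mathscr{U}$ by an open ball $B_U=B_{r_U}(x_U)$ with $x_U\in U$. Following the earlier lemma, for sets $U$ of positive diameter I would take $r_U=2|U|$, so that $U\subset B_U$ and $|B_U|\leq 2r_U=4|U|$; for degenerate sets $U$ with $|U|=0$ I would choose tiny radii $r_U$ so that the total contribution of the gauge over these sets is below $\epsilon$. The crucial new point is to compare the gauge values $\tau(B_U)=|B_U|^{\dim(B_U)}$ against $\tau(U)=|U|^{\dim(U)}$. Here I would use two facts already available: first, since $U\subset B_U$, Lemma~2.1 (monotonicity of $\dim$) gives $\dim(U)\leq\dim(B_U)$; second, because $\dim(X)<\infty$, all the exponents in play lie in the bounded interval $[0,\dim(X)]$. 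With $|B_U|\leq 4|U|$, I can bound $|B_U|^{\dim(B_U)}\leq (4|U|)^{\dim(B_U)}\leq 4^{\dim(X)}\,|U|^{\dim(B_U)}$, and then I need to control $|U|^{\dim(B_U)}$ by $|U|^{\dim(U)}$.

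The main obstacle is exactly this last comparison of powers when $|U|$ may exceed $1$: if $|U|>1$ then raising to the larger exponent $\dim(B_U)\geq\dim(U)$ increases the value, and the blowup is governed by $|U|^{\dim(B_U)-\dim(U)}\leq |U|^{\dim(X)}$. For the measure-theoretic argument, however, we take $\delta\to 0^+$, so we may restrict attention to covers with $|U|\leq\delta<1$, and then $|U|\leq 1$ forces $|U|^{\dim(B_U)}\leq |U|^{\dim(U)}=\tau(U)$ since the exponent only grew on a base at most $1$. This is where finiteness of $\dim(X)$ is essential: it keeps the exponent $\dim(B_U)$ finite and provides the uniform constant $4^{\dim(X)}$. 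I expect this small-$\delta$ reduction to be the delicate bookkeeping point, analogous to the step ``Hence diam$(U)^{s_2}\leq$ diam$(U)^{s_1}$ for $0<\delta<1$'' in Lemma~2.3.

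Assembling these estimates, I would conclude that for $0<\delta<1$ each admissible set-cover yields a ball-cover in $\mathscr{B}_{4\delta}(A)$ with
\[
\lambda_{\loc,4\delta}^*(A)\leq \sum_{U\in\mathscr{U}} |B_U|^{\dim(B_U)}\leq \epsilon + 4^{\dim(X)}\sum_{U\in\mathscr{U}} |U|^{\dim(U)}.
\]
Taking the infimum over covers, then letting $\delta\to 0^+$ and $\epsilon\to 0$, gives $\lambda_{\loc}(A)\leq 4^{\dim(X)}H_{\loc}(A)$ for every Borel set $A$. Together with $H_{\loc}\leq\lambda_{\loc}$ this establishes strong equivalence with constant $C=4^{\dim(X)}$. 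The degenerate-set handling and the passage to the limit are routine once the power comparison in the small-$\delta$ regime is in place.
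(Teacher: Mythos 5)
Your treatment of the positive-diameter sets is correct and is essentially the paper's own argument: take $r_U=2|U|$, use monotonicity of $\dim$ (Lemma 2.1) together with $4|U|\leq 4\delta<1$ to get $|B_U|^{\dim(B_U)}\leq (4|U|)^{\dim(U)}\leq 4^{\dim(X)}|U|^{\dim(U)}$, with finiteness of $\dim(X)$ supplying the uniform constant. The gap is in your handling of the degenerate sets. You import the $\epsilon$-trick from the $s>0$ case of Lemma 1.4, proposing to choose tiny radii $r_U$ so that the replacement balls for the singletons contribute less than $\epsilon$ in total. For the local gauge $\tau(B)=|B|^{\dim(B)}$ this is impossible in general, because the gauge of balls around a point need not tend to $0$ with the radius. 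Take $X=\{0\}\cup\{1/n\;|\;n\geq 1\}$ with the usual metric: $X$ is compact and $\dim(X)=0$. Every open ball containing $0$ has positive diameter and dimension $0$, hence gauge $|B|^0=1$ no matter how small its radius; around each isolated point $1/n$ the small balls are the singleton $\{1/n\}$ itself, with gauge $0^0=1$. So if your cover contains the singleton $\{0\}$ (as the natural covers of, say, $A=\{0\}$ do), no choice of radii pushes the replacement total below $\epsilon<1$.

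The lemma survives because the local gauge also charges the singletons themselves: under the convention $0^0=1$ --- which the paper uses implicitly when it deduces ``there are at most finitely many $U\in\mathscr{U}$ with $|U|=0$'' from finiteness of $\sum_U|U|^{\dim(U)}$ --- a singleton $U$ contributes $\tau(U)=|U|^{\dim(U)}=1$ to the $H_{\loc}$-sum, not $0$. Hence each replacement ball of radius $r<1/2$, whose gauge is at most $1$ (base $\leq 2r<1$, exponent $\geq 0$), is dominated term by term by the singleton it replaces; the paper simply bounds $\sum_{U\in\mathscr{U}_0}|B_U|^{\dim(B_U)}\leq\sum_{U\in\mathscr{U}_0}1=\sum_{U\in\mathscr{U}_0}|U|^{\dim(U)}$, and no $\epsilon$ is needed at all. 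Your final displayed inequality is therefore true, but your derivation of it is not valid, and the repair is exactly the observation you are missing: the gauge $|U|^{\dim(U)}$ differs from $|U|^s$ with $s>0$ precisely in that singletons cost $1$ rather than $0$. (Indeed, under the opposite convention $0^0=0$ the lemma itself would be false in the example above: covering by singletons would give $H_{\loc}(X)=0$, while every ball containing $0$ has gauge $1$, so $\lambda_{\loc}(\{0\})\geq 1$.)
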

\begin{proof} Clearly $H_{\loc} \leq \lambda_{\loc}.$ Let $A$ be a Borel set. We may assume $H_{\loc}(A)<\infty$ and $A\neq \emptyset.$ Let $\epsilon>0,0<\delta<\frac{1}{4},$ $\mathscr{U}\in \mathscr{C}_\delta$ with $U\neq \emptyset$ for $U\in \mathscr{U}$ and 
$\sum_{U\in \mathscr{U}}|U|^{\dim(U)}<\infty.$ Let $x_U\in U$ for each $U\in \mathscr{U}.
$ If $|U|=0$ then $U$ is a singleton and so $\dim(U)=0.$ Hence there are at most finitely 
many $U\in \mathscr{U}$ with $|U|=0.$ Let $\mathscr{U}_0$ be the collection of such 
$U\in\mathscr{U}.$ Let $m:=\min_{U\in \mathscr{U}_0}\dim_{\loc}(x_U).$ let $0<r<\delta$ 
such that $(2r)^m\leq 1.$ Then, for $U\in \mathscr{U}_0,$ $U\subset B_r(x_U)$ and, since 
$0<2r<\delta<1$ and $m\leq \dim_{\loc}(x_U)\leq \dim(B_r(x_U)),$ $|B_r(x_U)|
^{\dim(B_r(x_U)}\leq (2r)^m \leq 1.$ For $U\in \mathscr{U}_0$ set $r_U:=r$. Let 
$\mathscr{U}_1$ be the collection of $U\in \mathscr{U}$ with $|U|>0.$ For $U\in 
\mathscr{U}_1$ let $r_U:=2|U|.$ Then for $U\in \mathscr{U}$ let $B_U:=B_{r_U}(x_U).$ Then 
$(B_U)_{U\in \mathscr{U}} \in \mathscr{B}_{4\delta}(A)$ and, since $4|U|\leq 4\delta<1$ 
and $\dim(U)\leq \dim(B_U) \leq \dim(X)<\infty$ for $U \in \mathscr{U},$ $\sum_{U\in 
\mathscr{U}} |B_U|^{\dim(B_U)} \leq \sum_{U\in \mathscr{U}_0} 1 + 
\sum_{U \in \mathscr{U}_1} (4|U|)^{\dim(U)} \leq 4^{\dim(X)} \sum_{U \in \mathscr{U}}|U|^{\dim(U)}.$ 
Hence $\lambda_{\loc} \leq 4^{\dim(X)}H_{\loc}.$
\end{proof}

\begin{proposition}
If $d_0$ is the dimension of $X,$ then $H^{d_0}\ll  H_{\loc}.$ 
\end{proposition}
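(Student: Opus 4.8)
The plan is to establish the stronger pointwise domination $H^{d_0}(A)\le H_{\loc}(A)$ for every Borel set $A$; the absolute continuity $H^{d_0}\ll H_{\loc}$ then follows at once, since $H_{\loc}(E)=0$ forces $H^{d_0}(E)=0$. The one structural ingredient is the monotonicity of Hausdorff dimension from Lemma 2.1: every $U\subseteq X$ satisfies $\dim(U)\le\dim(X)=d_0$.

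First I would compare the two Carath\'eodory approximations at a fixed scale $\delta\in(0,1)$. Both $H^{d_0}$ and $H_{\loc}$ are built from the same covering class $\mathscr{C}=\mathscr{P}(X)$ and differ only in the gauge, $\tau(U)=|U|^{d_0}$ versus $\tau(U)=|U|^{\dim(U)}$. Fix $\mathscr{U}\in\mathscr{C}_\delta(A)$ and compare term by term. For a non-degenerate $U$ one has $0<|U|\le\delta<1$, so $t\mapsto|U|^t$ is non-increasing, and together with $\dim(U)\le d_0$ this gives $|U|^{d_0}\le|U|^{\dim(U)}$. For a singleton $U$ (so $|U|=0$ and $\dim(U)=0$) with $d_0>0$ one has $|U|^{d_0}=0\le 1=|U|^{\dim(U)}$, in keeping with the convention $0^0=1$ that renders $H^0$ the counting measure. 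Summing over $\mathscr{U}$ yields $\sum_{U\in\mathscr{U}}|U|^{d_0}\le\sum_{U\in\mathscr{U}}|U|^{\dim(U)}$, and taking the infimum over all $\mathscr{U}\in\mathscr{C}_\delta(A)$ gives $H^{d_0}_\delta(A)\le(H_{\loc})_\delta(A)$, where the subscripts denote the scale-$\delta$ approximating set functions.

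Letting $\delta\to 0^+$, that is, passing to the supremum over $\delta$ that defines both outer measures, preserves the inequality, so $H^{d_0}(A)\le H_{\loc}(A)$ for all Borel $A$, and hence $H^{d_0}\ll H_{\loc}$.

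I do not expect a genuine obstacle: all the content sits in the elementary inequality $|U|^{d_0}\le|U|^{\dim(U)}$, valid precisely because $|U|<1$ and $d_0$ is the \emph{global} dimension. The only point demanding care is the bookkeeping for singletons together with the $0^0=1$ convention in the term-by-term estimate. The degenerate cases are quickly dispatched: if $d_0=\infty$ then $|U|^{d_0}=0$ whenever $|U|<1$, so $H^{d_0}$ vanishes on every set and the claim is trivial; if $d_0=0$ then $\dim(U)=0$ for all $U$, the two gauges coincide, and $H^{d_0}=H_{\loc}$.
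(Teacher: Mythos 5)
Your proof is correct, and it in fact establishes something stronger than the proposition. The core fact is the same one the paper's proof turns on — by Lemma 2.1, $\dim(U)\le d_0$ for every $U\subset X$, so for $0<|U|\le\delta<1$ one has $|U|^{d_0}\le|U|^{\dim(U)}$ — but the two arguments are organized differently. The paper argues from a null set: given $H_{\loc}(N)=0$, it picks covers $\mathscr{U}_\delta$ with $\sum_{U}|U|^{\dim(U)}<\epsilon$ and then chains $H^{d_0,*}_\delta(N)\le\sum_{U}H^{d_0}_\delta(U)\le\sum_{U}H^{\dim(U)}_\delta(U)\le\sum_{U}|U|^{\dim(U)}<\epsilon$, using countable subadditivity of the pre-measure plus the observation that $\{U\}$ is itself a $\delta$-cover of $U$; this yields only the null-set implication that the proposition asserts. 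You instead compare the two gauges term by term inside an arbitrary $\delta$-cover, pass to the infimum over covers and then to the supremum over $\delta\in(0,1)$, and obtain the pointwise domination $H^{d_0}(A)\le H_{\loc}(A)$ for all Borel $A$, of which $H^{d_0}\ll H_{\loc}$ is an immediate corollary. Your route is more streamlined (no $\epsilon$-management and no appeal to subadditivity of the outer measure), proves a strictly stronger conclusion, and your bookkeeping for singletons under the $0^0=1$ convention and for the degenerate cases $d_0\in\{0,\infty\}$ is careful and correct — indeed more careful than the paper, which is silent on $d_0=\infty$, where $H^{d_0}$ only makes sense under the convention you spell out.
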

\begin{proof} 
Suppose $N\subset X$ is Borel measurable with $H_{\loc}(N)=0.$ Let $\epsilon>0.$ Then for all $\delta>0$ there exists a $\mathscr{U}_\delta \in \mathscr{C}_\delta(N)$ such that $\sum_{U\in \mathscr{U}_\delta}|U|^{\dim(U)}<\epsilon.$ But since for  $U\in \mathscr{U}_\delta,$ $\{U\}\in \mathscr{C}_\delta(U),$ and since $H^{d_0,*}_{\delta}$ is an outer measure, for $0<\delta<1$ we have
$H^{d_0,*}_\delta(N)\leq \sum_{U\in \mathscr{U}_\delta}H_\delta^{d_0}(U)\leq 
\sum_{U\in \mathscr{U}_\delta}H_\delta^{\dim(U)}(U)\leq \sum_{U\in \mathscr{U}_\delta}|U|^{\dim(U)}<\epsilon.$ Hence $H^{d_0}(N)\leq \epsilon.$  Since $\epsilon>0$ was arbitrary, $H^{d_0}(N)=0.$
\end{proof}

The following two propositions relate the pointwise dimension to the global dimension.

\begin{proposition} Suppose $X$ is separable with Hausdorff dimension $d_0.$ Let $A:=\dim_{\loc}^{-1}([0,d_0)).$ Then $H^{d_0}(A)=0.$ In particular, $H^{d_0}(X)=H^{d_0}(\dim_{\loc}^{-1}(\{d_0\}))$.
\end{proposition}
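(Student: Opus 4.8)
The plan is to show that the set $A=\dim_{\loc}^{-1}([0,d_0))$ where points have local dimension strictly below the global dimension is $H^{d_0}$-null, by covering it with a countable family of open sets each of whose own Hausdorff dimension is strictly less than $d_0$. The starting observation is that for each $x\in A$ there is, by definition of $\dim_{\loc}$ and Lemma 2.1, an open ball $B_{\epsilon(x)}(x)$ with $\dim(B_{\epsilon(x)}(x))<d_0$. These balls cover $A$, and separability is exactly what lets me extract a \emph{countable} subcover: a separable metric space is second countable, hence Lindel\"of, so the open cover $\{B_{\epsilon(x)}(x)\}_{x\in A}$ admits a countable subcover $(U_i)_{i=1}^\infty$ with $A\subset\bigcup_i U_i$ and $\dim(U_i)<d_0$ for every $i$.

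Having reduced to a countable cover, I would argue that each piece is $H^{d_0}$-null. Fix $i$ and let $s_i:=\dim(U_i)<d_0$. Pick any $s_i<t<d_0$. Since $t>\dim(U_i)=\inf\{s:H^s(U_i)=0\}$, we have $H^t(U_i)=0$; and because $d_0>t$, the monotonicity-in-$s$ argument recalled just before Section 2 (namely $H^{d_0}(U_i)=0$ whenever $H^t(U_i)<\infty$ for some $t<d_0$) gives $H^{d_0}(U_i)=0$. Then by countable subadditivity of the outer measure $H^{d_0}$ together with monotonicity,
\[
H^{d_0}(A)\;\le\;H^{d_0}\!\Big(\bigcup_{i=1}^\infty U_i\Big)\;\le\;\sum_{i=1}^\infty H^{d_0}(U_i)\;=\;0,
\]
so $H^{d_0}(A)=0$. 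The final ``in particular'' statement follows immediately: $X=A\cup\dim_{\loc}^{-1}(\{d_0\})\cup\dim_{\loc}^{-1}((d_0,\infty])$, and the last set is empty because by Lemma 2.1 every open neighborhood $U$ of any point satisfies $\dim(U)\le\dim(X)=d_0$, whence $\dim_{\loc}\le d_0$ everywhere; so $X=A\cup\dim_{\loc}^{-1}(\{d_0\})$ with $A$ null, giving $H^{d_0}(X)=H^{d_0}(\dim_{\loc}^{-1}(\{d_0\}))$.

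The only genuinely delicate point is the passage from the pointwise local bound to a countable cover, which is precisely where separability enters through the Lindel\"of property; everything else is bookkeeping with the dimension inequalities already established. I expect no real obstacle beyond making sure the strict inequality $\dim(U_i)<d_0$ is preserved when choosing the intermediate exponent $t$, which is automatic since one can always interpolate a $t$ strictly between $\dim(U_i)$ and $d_0$.
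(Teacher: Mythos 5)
Your proof is correct and follows essentially the same route as the paper: use the definition of $\dim_{\loc}$ to cover $A$ by open sets of dimension strictly below $d_0$, invoke separability (second countability, hence the Lindel\"of property) to extract a countable cover, and conclude $H^{d_0}(A)=0$. The only difference is the endgame: the paper forms finite unions $A_j:=\cup_{k\leq j}U_k$, bounds $\dim(A_j)<d_0$, and invokes continuity of measure, whereas you show each piece $U_i$ is itself $H^{d_0}$-null (via an intermediate exponent $t$) and apply countable subadditivity directly --- slightly more direct bookkeeping, but the same argument in substance.
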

\begin{proof}
$A$ is open since $d$ is upper semicontinuous. For $x\in A$ let $U_x\in \mathscr{N}(x)$ with $\dim(U_x)<d_0$ and $U_x\subset A.$ Then the $U_x$ form an open cover of $A.$ Since $X$ has a countable basis, there exists a countable open cover $(U_k)$ of $A$ with the property that for all $x$ there exists a $k$ with $x\in U_k \subset U_x.$ In particular $\dim(U_k)\leq \dim(U_x)<d_0.$ Let $A_j:=\cup_{k\leq j} U_k.$ Then, since $H^s(A_j)\leq \sum_{k\leq j} H^s(U_k)$ for any $s\geq 0,$ $\dim(A_j)\leq \max_{k\leq j} \dim(U_k)<d_0.$  So $H^{d_0}(A_j)=0$ for all $j.$ But by continuity of measure, $H^{d_0}(A)=\sup_j H^{d_0}(A_j)=0.$ Since $d\leq d_0$ the other result follow immediately. 
\end{proof}

\begin{proposition}
Let $X$ be a separable metric space. Then $\dim(X)=\sup_{x\in X}\dim_{\loc}(x).$ Moreover, if $X$ is compact then the supremum is attained. 
\end{proposition}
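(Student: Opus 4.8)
The plan is to prove the identity by establishing two inequalities and then to treat attainment separately.

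First I would dispatch the easy inequality $\sup_{x\in X}\dim_{\loc}(x)\leq \dim(X)$. For every $x$ the whole space $X$ is an open neighborhood of $x$, so by the definition of $\dim_{\loc}$ as an infimum together with the monotonicity of dimension (Lemma 2.1), $\dim_{\loc}(x)\leq \dim(X)$; taking the supremum over $x$ gives the claim.

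For the reverse inequality set $s:=\sup_{x\in X}\dim_{\loc}(x)$. If $s=\infty$ the easy inequality already forces $\dim(X)=\infty=s$, so I assume $s<\infty$ and suppose toward a contradiction that $\dim(X)>s$. I choose a real number $t$ with $s<t<\dim(X)$. Since $\dim_{\loc}(x)\leq s<t$ for every $x$, the infimum defining $\dim_{\loc}(x)$ produces a neighborhood $U_x\in\mathscr{N}(x)$ with $\dim(U_x)<t$. The family $\{U_x\}_{x\in X}$ is then an open cover of $X$. Here I would invoke separability: a separable metric space is second countable, hence Lindel\"of, so this cover admits a countable subcover $(U_k)_k$, each satisfying $\dim(U_k)<t$. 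Because $t>\dim(U_k)$ forces the $t$-dimensional measure to vanish, $H^t(U_k)=0$ for every $k$, and countable subadditivity of the outer measure gives $H^t(X)\leq\sum_k H^t(U_k)=0$, so $\dim(X)\leq t$, contradicting $t<\dim(X)$. This establishes $\dim(X)\leq s$ and completes the equality. This countable-subcover step, which relies on the Lindel\"of property of separable metric spaces and mirrors the covering argument of the preceding proposition, is the crux of the proof.

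Finally, for the attainment statement when $X$ is compact: by the equality just proved the supremum equals $\dim(X)$. By Theorem 2.2 the function $\dim_{\loc}$ is upper semicontinuous, and an upper semicontinuous function on a sequentially compact metric space attains its maximum. Concretely I would take a sequence $(x_n)$ with $\dim_{\loc}(x_n)\to s$, extract a convergent subsequence $x_{n_k}\to x_0$ using compactness, and apply upper semicontinuity to get $s=\limsup_k \dim_{\loc}(x_{n_k})\leq \dim_{\loc}(x_0)\leq s$, whence $\dim_{\loc}(x_0)=s$. The only mild subtlety is the case $s=\infty$, which the same $\limsup$ inequality handles directly.
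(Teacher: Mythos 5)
Your proof is correct and takes essentially the same route as the paper: for the equality, the identical key step (separable $\Rightarrow$ Lindel\"of, a countable subcover by neighborhoods of controlled dimension, then countable subadditivity of $H^t$), with your contradiction via an intermediate exponent $t$ replacing the paper's direct $\epsilon/2$-argument; and for attainment, the same reliance on compactness together with the upper semicontinuity of $\dim_{\loc}$ (Theorem 2.2). The only cosmetic difference is that you extract a maximizing sequence and use sequential compactness, whereas the paper applies open-cover compactness to the sets $\dim_{\loc}^{-1}\left([0,d_0-\tfrac{1}{n})\right)$; both are standard proofs that an upper semicontinuous function attains its supremum on a compact space, and your version even handles the case $d_0=\infty$ more cleanly.
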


\begin{proof}
Clearly $\sup_{x\in X}\dim_{\loc}(x) \leq \dim(X).$ Conversely, let $\epsilon>0.$ For $x\in X$ let $U_x\in \mathscr{N}(x)$ such that $\dim(U_x)\leq \dim_{\loc}(x)+\frac{\epsilon}{2}.$ Then the $(U_x)_{x\in X}$ form an open cover of $X.$ Since $X$ is separable it is Lindel\"{o}f. So let $(U_{x_i})_{i\in \mathbb{Z}_{+}}$ be a countable subcover. Then if $\sup_{i\in \mathbb{Z}_+}\dim(U_{x_i})=\infty$ then also $\dim(X)=\infty.$ Else if $\sup_{i\in \mathbb{Z}_+}\dim(U_{x_i})<t$ then $H^t(U_{x_i})=0$ for all $i$ and so $H^t(X)\leq \sum_{i=1}^\infty H^t(U_{x_i})=0.$ So $\dim(X)\leq t.$ Hence $\dim(X)=\sup_{i\in \mathbb{Z}_+}\dim(U_{x_i}).$ Then choose $j\in \mathbb{Z}_+$ such that $\dim(X)\leq \dim(U_{x_j})+\frac{\epsilon}{2}.$ Then $\dim(X)\leq \dim_{\loc}(x_j)+\epsilon\leq \sup_{x\in X}\dim_{\loc}(x).$ Hence $\dim(X)=\sup_{x\in X}\dim_{\loc}(x).$

Now suppose $X$ is compact. Let $d_0:=\dim(X).$ It remains to show that there exists some $x\in X$ with $\dim_{\loc}(x)=d_0.$ Suppose not. Then clearly $d_0>0.$ Let $m\geq 1$ such that $\frac{1}{m}<d_0.$ Then the sets $U_n:=\dim_{\loc}^{-1}[0,d_0-\frac{1}{n})$ for $n\geq m$ form an open cover of $X.$ By compactness there exists a finite subcover. So there exists an $N>0$ such that $X=\dim_{\loc}^{-1}[0,d_0-\frac{1}{N}).$ Hence $\sup_{x\in X} \dim_{\loc}(x)\leq d_0-\frac{1}{N}<d_0,$ a contradiction.

\end{proof}

\section{Variable Ahlfors $Q$-regularity}
Suppose $(X,d)$ is compact. For $Q:X\rightarrow [0,\infty)$ continuous, define $Q^-,Q^+, Q^c:\mathscr{B}\rightarrow [0,\infty)$ by $Q^-(U)=\inf_{x\in U} Q(x), Q^+(U)=\sup_{x\in U} Q(x).$ For arbitrary $Q:X\rightarrow [0,\infty)$ define $Q_c:\mathscr{B}\rightarrow 
[0,\infty)$ by $Q_c(B_r(x))=Q(x).$ Then for $\tilde{Q}:\mathscr{B}\rightarrow [0,\infty)$ with $Q^- \leq \tilde{Q}\leq Q^+,$ let $\lambda^{\tilde{Q}}:=\mu_\tau$, where $\tau$ is restricted to $\mathscr{B}$ and defined by $\tau(B):=|B|^{\tilde{Q}(U)}, \tau(\emptyset)=0.$

For $Q:X\rightarrow [0,\infty)$, we call $X$ $Q$-amenable if $0<\lambda^{Q_c}(B)<\infty$ for every non-empty open ball $B$ of finite radius in $X.$ In the case of $X$ compact this is equivalent to $\lambda^{Q_c}$ being finite with full support.

\begin{proposition} If $X$ is $Q$-amenable with $Q$ continuous then $\dim_{\loc}(x)=Q(x)$ for all $x\in X.$ 
\end{proposition}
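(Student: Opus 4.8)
The plan is to prove the two inequalities $\dim_{\loc}(x_0)\le Q(x_0)$ and $\dim_{\loc}(x_0)\ge Q(x_0)$ separately, for an arbitrary fixed $x_0\in X$, by comparing the variable measure $\lambda^{Q_c}$ against the constant-exponent spherical measures $\lambda^s$ on small balls around $x_0$, and then invoking the characterization $\dim(A)=\inf\{s:\lambda^s(A)=0\}=\sup\{s:\lambda^s(A)=\infty\}$ established in the introduction.

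First I would fix $\epsilon>0$ and use continuity of $Q$ to choose $\rho>0$ so that $Q(x_0)-\epsilon<Q(y)<Q(x_0)+\epsilon$ for every $y\in B_\rho(x_0)$. The geometric observation that drives the comparison is this: if $0<\delta<\rho/2$, $0<r\le\rho/2$, and $B_s(y)$ is any ball with $|B_s(y)|\le\delta$ that meets $B_r(x_0)$, then its center $y$ lies in $B_\rho(x_0)$. Indeed $y\in B_s(y)$, so if $z\in B_s(y)\cap B_r(x_0)$ then $d(x_0,y)\le d(x_0,z)+d(z,y)< r+|B_s(y)|\le \rho/2+\delta<\rho$, using that $y$ and $z$ both lie in $B_s(y)$ and so are within its diameter. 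Hence every small covering ball meeting such a $B_r(x_0)$ has its $Q_c$-exponent $Q(y)$ pinned inside $(Q(x_0)-\epsilon,Q(x_0)+\epsilon)$.

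For the upper bound, set $s:=Q(x_0)+\epsilon$ and $V:=B_{\rho/2}(x_0)$, a ball of finite radius, so $\lambda^{Q_c}(V)<\infty$ by $Q$-amenability. For $\delta<\min(\rho/2,1)$ and any covering $\mathscr U\in\mathscr B_\delta(V)$ whose balls meet $V$, each center satisfies $Q(y)<s$, so $|B|^{s}\le|B|^{Q(y)}$ because $|B|\le\delta<1$. Taking the infimum over coverings gives $\lambda^s_\delta(V)\le\lambda^{Q_c}_\delta(V)$, and letting $\delta\to0^+$ yields $\lambda^s(V)\le\lambda^{Q_c}(V)<\infty$; finiteness of $\lambda^s(V)$ forces $\dim(V)\le s$, whence $\dim_{\loc}(x_0)\le\dim(V)\le Q(x_0)+\epsilon$. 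For the lower bound, set $t:=\max(Q(x_0)-\epsilon,0)$; for any $0<r\le\rho/2$ the same center control gives $Q(y)>t$ for small covering balls meeting $B_r(x_0)$, so now $|B|^{Q(y)}\le|B|^{t}$, and taking infima and $\delta\to0^+$ gives $0<\lambda^{Q_c}(B_r(x_0))\le\lambda^t(B_r(x_0))$, the positivity again coming from $Q$-amenability. Positivity of $\lambda^t$ forces $\dim(B_r(x_0))\ge t$ for every such $r$, so $\dim_{\loc}(x_0)=\inf_r\dim(B_r(x_0))\ge t\ge Q(x_0)-\epsilon$. Letting $\epsilon\to0^+$ in both bounds gives $\dim_{\loc}(x_0)=Q(x_0)$.

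The main obstacle I anticipate is precisely the center-control step. Because $\lambda^{Q_c}$ reads off the exponent at the \emph{center} of each covering ball, a ball that merely touches the region of interest could a priori have its center far away, since in a general metric space its radius may be large even when its diameter is small; then $Q$ at that center would be uncontrolled and the comparison of powers would fail. The resolution is that the center always belongs to its own ball and hence lies within the diameter of any point the ball contains, which, together with the factor $1/2$ buffer between $r$ and $\rho$, confines all relevant centers to $B_\rho(x_0)$, where continuity of $Q$ has been used. The remaining care is bookkeeping already present in the earlier lemmas: discarding empty sets and singletons, using $|B|<1$ to compare powers in the correct direction, and passing from the $\delta$-premeasures to the measures by monotonicity in $\delta$.
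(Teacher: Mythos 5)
Your proof is correct, and it runs on the same engine as the paper's: covering balls of diameter at most $\delta$ that meet a slightly smaller ball around $x_0$ have their centers trapped in a controlled ball, so the variable exponent $Q_c$ can be compared with a constant exponent, and $Q$-amenability then transfers finiteness or positivity of $\lambda^{Q_c}$ to the constant-exponent spherical measures $\lambda^s$, pinning down the dimension. Where you genuinely diverge is in the arrangement, and each divergence is a simplification. (i) You spend continuity of $Q$ at the start, confining $Q$ to $(Q(x_0)-\epsilon,\,Q(x_0)+\epsilon)$ on a fixed $B_\rho(x_0)$, and then prove both inequalities directly from finiteness/positivity; the paper instead proves the sandwich $Q^-(B)\leq\dim(B)\leq Q^+(B)$ for every nonempty open ball by contradiction (assuming $\dim(B)<Q^-(B)$ forces $\lambda^{Q_c}=0$ on a smaller ball, assuming $\dim(B)>Q^+(B)$ forces $\lambda^{Q_c}=\infty$) and only uses continuity in its final sentence. (ii) The paper's contradiction in the case $\dim(B)>Q^+(B)$ requires a strictly smaller concentric ball whose dimension still exceeds $Q^+(B)$, for which it needs the auxiliary identity $\dim(B_r(x))=\sup_{n}\dim(B_{r-\frac{1}{n}}(x))$; your fixed buffer between $r\leq\rho/2$ and $\rho$, chosen in advance via continuity, eliminates this step entirely. (iii) Your center control uses only that the center of a nonempty ball lies in the ball, hence within the ball's diameter of any point it shares with $B_r(x_0)$; the paper instead re-represents each covering ball to normalize its radius to at most $2\delta$ (replacing $r_U$ by $\delta$ or by $2|U|$). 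Your mechanism is cleaner and, usefully, is insensitive to which center-radius presentation of a covering ball is used, which matters since $Q_c$ is defined through that presentation. The small points to keep explicit in a final write-up --- discarding empty balls and balls missing the target set, and deducing ``$\lambda^s(V)<\infty \Rightarrow \dim(V)\leq s$'' and ``$\lambda^t(B_r(x_0))>0 \Rightarrow \dim(B_r(x_0))\geq t$'' from Lemma 2.2 together with the characterization of $\dim$ via $\lambda^s$ at the end of the introduction --- are all things you already flag, so nothing is missing.
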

\begin{proof} Let $B=B_r(x)$ a non-empty open ball, $q^-:=Q^-(B), q^+:=Q^+(B), d_0:=\dim(B).$ Let $B':=B_{\frac{r}{2}}(x)$ Then if $d_0<q^-, \dim(B')\leq d_0< q^-$ so $\lambda^{q^-}(B')=0.$ Let $0<\delta<\min\{\frac{r}{8}, 1\}.$ Let $\mathscr{U}\in \mathscr{B}_\delta(B')$ and $U\in \mathscr{U}.$ We may assume $U\cap B' \neq \emptyset$, since otherwise $\mathscr{U}$ may be improved by removing such a $U.$ Say $U=B_{r_U}(x_U).$
Moreover, we may assumue $r_U\leq 2\delta.$ Indeed, if $|U|=0$ and $r_U>\delta$ then $U=B_{\delta}(x_U)$ so we may take $r_U=\delta$ in that case. If $|U|>0$ then if $r_U>2|U|$ then $B_{r_U}(x_U) = B_{2|U|}(x_U)$ so we may take $r_U = 2|U|\leq 2\delta.$ Say $w\in U\cap B'.$ Then if $z \in U,$ 
$\rho(z,x)\leq \rho(z,x_U)+\rho(x_U,w)+\rho(w,x)\leq 2r_U+\frac{r}{2} \leq 4\delta+\frac{r}{2}<r.$ So $U\subset B.$ Hence $q^- \leq Q^c(U)$ and since $|U|\leq 1,$ $|U|^{q^-}\geq |U|^{Q^c(U)}$. So $\lambda^{Q_c}(B')=0,$ a contradiction. If $d_0>q^+$ then since $B_r(x)=\cup_{n=1}^\infty B_{r-\frac{1}{n}}(x),$ it is straightforward, using countable subadditivity of the measures $H^s$ and the definition of Hausdorff dimension, to verify that $\dim(B_r(x)) = \sup_{n\geq 1} \dim(B_{r-\frac{1}{n}}(x))$. Since $d_0>q^+,$ let $N$ so that $\dim(B_{r-\frac{1}{N}}(x))>q^+.$ Let $r'=r-\frac{1}{N}$ and $B'=B_{r'}(x).$ Then $\lambda^{q^+}(B') = \infty.$ Let $0<\delta<\frac{1}{4N}.$ Let $\mathscr{U}\in \mathscr{B}_\delta(B')$ and $U\in \mathscr{U}.$ We may assume $U\cap B' \neq \emptyset,$ say $w\in U\cap B'$. As before we may assume $r_U\leq 2\delta$ and so if $z\in U$ then $\rho(z,x)\leq 2r_u+r'\leq 4\delta+r'<r.$ So $U\subset B.$ Hence $Q_c(U)\leq q^+$. Since $|U|<1,$ $|U|^{q^+}\leq |U|^{Q_c(U)}.$  Hence $\lambda^{Q_c}(B')=\infty,$ a contradiction. Hence $Q^-(B)\leq \dim(B) \leq Q^+(B)$ for every non-empty open ball $B.$ 
The result then follows since $Q$ is continuous.  
\end{proof}

A Borel measure $\nu$ on a metric space $X$ is said to have local dimension $d_\nu(x)$ at $x$ if $\lim_{r\rightarrow 0^+} \frac{\log(\nu(B_r(x)))}{\log(r)}=d_\nu(x)$. Since the limit may not exist, we may also consider upper and lower local dimensions at $x$ by replacing the limit with an upper or lower limit, respectively. 

If $Q:X\rightarrow (0,\infty)$ is a bounded function, then a measure $\nu$ is called Ahlfors $Q$-regular if there exists a constant $C>0$ so that $\frac{1}{C} \nu(B_r(x))\leq  r^{Q(x)}\leq C\nu(B_r(x))$ for all $0<r\leq \diam(X)$ and $x\in X.$\cite{Sob} It can be immediately observed that such a measure $\nu$ is $Q$-amenable and has $d_\nu(x)=Q(x)$ for all $x$ .

A function $p$ on a metric space $(X,\rho)$ is log-H{\"o}lder continuous if 
there exists a $C>0$ such that $|p(x)-p(y)|\leq \frac{-C}{\log(\rho(x,y))}$ for all $x,y$ with $0<\rho(x,y)<\frac{1}{2}.$

\begin{lemma} For $X$ compact, if $Q:X\rightarrow (0,\infty)$ log-H{\"o}lder continuous, $\tilde{Q}:\mathscr{B}\rightarrow [0,\infty)$ with $Q^-\leq \tilde{Q} \leq Q^+,$ then $\lambda^{Q^+}\simeq \lambda^{Q^-} \simeq \lambda^{\tilde{Q}}.$
\end{lemma}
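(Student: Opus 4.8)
The plan is to reduce the whole statement to a uniform ball-by-ball comparison of the three set functions
$$ \tau^{+}(B):=|B|^{Q^{+}(B)},\qquad \tau^{-}(B):=|B|^{Q^{-}(B)},\qquad \tau^{\tilde Q}(B):=|B|^{\tilde Q(B)} $$
that define $\lambda^{Q^{+}},\lambda^{Q^{-}},\lambda^{\tilde Q}$. Since each approximating outer measure $\mu^{*}_{\tau,\delta}$ is nonincreasing in $\delta$ (shrinking $\delta$ shrinks the covering class $\mathscr{B}_{\delta}(A)$ and hence raises the infimum), we have $\mu^{*}_{\tau}=\lim_{\delta\to0^{+}}\mu^{*}_{\tau,\delta}$, so it is enough to produce a fixed constant $K$, independent of $B$ and $\delta$, with $\tau^{-}(B)\le K\,\tau^{+}(B)$ for all balls of diameter below some threshold; such an inequality passes through the infima over covers in $\mathscr{B}_{\delta}(A)$ and then through the limit in $\delta$. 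Before doing so I would dispose of degenerate balls: if $|B|=0$ then, because $Q>0$, we have $\tilde Q(B)\ge Q^{-}(B)>0$, so all three set functions vanish and such balls are irrelevant.

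For the easy direction, note that for $0<|B|<1$ one has $\log|B|<0$, so $t\mapsto|B|^{t}$ is decreasing; combined with $Q^{-}(B)\le\tilde Q(B)\le Q^{+}(B)$ this gives $\tau^{+}(B)\le\tau^{\tilde Q}(B)\le\tau^{-}(B)$, and hence $\lambda^{Q^{+}}\le\lambda^{\tilde Q}\le\lambda^{Q^{-}}$. The entire statement therefore reduces to the single reverse comparison $\lambda^{Q^{-}}\le e^{C}\lambda^{Q^{+}}$, that is, to bounding the ratio $\tau^{-}(B)/\tau^{+}(B)=|B|^{-(Q^{+}(B)-Q^{-}(B))}$ by a constant.

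This is exactly where log-H\"older continuity enters, and it is the one real step. For a ball $B$ with $0<|B|<\tfrac12$, any $x,y\in B$ satisfy $\rho(x,y)\le|B|<\tfrac12$, and since $t\mapsto C/\log(1/t)$ is increasing on $(0,1)$ the log-H\"older bound yields $|Q(x)-Q(y)|\le C/\log(1/\rho(x,y))\le C/\log(1/|B|)$; taking the supremum over $x,y\in B$ gives the oscillation estimate $Q^{+}(B)-Q^{-}(B)\le C/\log(1/|B|)$. Consequently
$$ |B|^{-(Q^{+}(B)-Q^{-}(B))}=\exp\!\big[(Q^{+}(B)-Q^{-}(B))\,\log(1/|B|)\big]\le\exp(C)=e^{C}, $$
so $\tau^{-}(B)\le e^{C}\tau^{+}(B)$ uniformly over all balls of diameter below $\tfrac12$. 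Restricting to $\delta<\tfrac12$, passing to covers, and taking $\delta\to0^{+}$ then gives $\lambda^{Q^{-}}\le e^{C}\lambda^{Q^{+}}$; chaining this with the easy chain $\lambda^{Q^{+}}\le\lambda^{\tilde Q}\le\lambda^{Q^{-}}$ delivers $\lambda^{Q^{+}}\simeq\lambda^{\tilde Q}\simeq\lambda^{Q^{-}}$.

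I expect the crux, and the only place requiring genuine care, to be the exponent cancellation in the displayed estimate: the multiplicative oscillation of $Q$ across a ball is controlled precisely by $1/\log(1/|B|)$, which is annihilated by the factor $\log(1/|B|)$ produced by the power $|B|^{\,\cdot}$. The remaining work is bookkeeping, namely verifying that the comparison constant $e^{C}$ is independent of both $B$ and $\delta$ so that it survives the covering-class infima and the supremum over $\delta$; since that constant never depends on $\delta$, this is routine once the per-ball estimate is established.
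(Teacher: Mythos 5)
Your proof is correct and takes essentially the same route as the paper's: log-H\"older continuity bounds the oscillation $Q^{+}(B)-Q^{-}(B)$ by $C/\log(1/|B|)$, yielding the uniform per-ball comparison $|B|^{Q^{+}(B)}\leq |B|^{\tilde{Q}(B)}\leq |B|^{Q^{-}(B)}\leq e^{C}|B|^{Q^{+}(B)}$, which then passes through covers to the measures. Your write-up merely makes explicit the bookkeeping (degenerate balls, monotonicity in $\delta$, independence of the constant from $B$ and $\delta$) that the paper compresses into ``the result follows.''
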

\begin{proof}
Let $U$ open with $0<|U|<\frac{1}{2}.$ 
Then for $x,y\in U,$ $|Q(x)-Q(y)|\leq \frac{-
C}{\log(|U|)}.$ Hence $0\leq 
\log(|U|)(Q^-(U)-Q^+(U))\leq C.$ Then $|U|^{Q^+(U)}\leq |U|^{Q^-(U)} \leq e^C|U|^{Q^+(U)}.$ The result follows.
\end{proof}
The following lemma may be found in \cite{Sob}.
\begin{lemma} If $\nu$ is Ahlfors $Q$-regular then $Q$ is log-H\"older continuous.\end{lemma}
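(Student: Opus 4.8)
The plan is to extract the modulus of continuity of $Q$ directly from the two-sided Ahlfors bound by comparing the $\nu$-measures of two balls of comparable radius centered at $x$ and $y$. First I would record two preliminary facts: the inequality $\tfrac{1}{C}\nu(B)\leq r^{Q}\leq C\nu(B)$ forces $C\geq 1$ and shows every ball of positive radius has positive finite $\nu$-measure; and since $Q$ is bounded we may set $Q^{+}:=\sup_{x\in X}Q(x)<\infty$.

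Fix $x\neq y$ and write $r:=\rho(x,y)$, assuming first that $2r\leq\diam(X)$. The geometric input is the pair of inclusions $B_r(x)\subseteq B_{2r}(y)$ and $B_r(y)\subseteq B_{2r}(x)$, immediate from the triangle inequality. Combining the first inclusion with monotonicity of $\nu$ and the regularity bounds at $x$ (radius $r$) and at $y$ (radius $2r$) yields $\tfrac{1}{C}r^{Q(x)}\leq\nu(B_r(x))\leq\nu(B_{2r}(y))\leq C(2r)^{Q(y)}$, that is, $r^{Q(x)}\leq C^{2}2^{Q(y)}r^{Q(y)}$.

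Taking logarithms and dividing by $\log r$, which is negative because $r<\tfrac{1}{2}<1$, converts this into $Q(y)-Q(x)\leq\frac{2\log C+Q(y)\log 2}{-\log r}\leq\frac{2\log C+Q^{+}\log 2}{-\log\rho(x,y)}$. The symmetric inclusion $B_r(y)\subseteq B_{2r}(x)$ gives the same bound for $Q(x)-Q(y)$, so with $C':=2\log C+Q^{+}\log 2>0$ we obtain $|Q(x)-Q(y)|\leq\frac{-C'}{\log\rho(x,y)}$, which is exactly log-H\"older continuity.

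The only gap is the requirement $2r\leq\diam(X)$, which can fail in the regime $\diam(X)/2\leq r<\tfrac{1}{2}$ (nonempty only when $\diam(X)<1$). I expect this to be the one point needing care, but it is handled trivially: there $r$ is bounded away from $0$, so $-\log\rho(x,y)$ is bounded below by the positive constant $-\log(\diam(X)/2)$, while $|Q(x)-Q(y)|\leq Q^{+}$; enlarging the constant to absorb $Q^{+}\bigl(-\log(\diam(X)/2)\bigr)$ covers this range. The substantive content is entirely the two-scale ball comparison of the preceding paragraphs; the remainder is bookkeeping of signs and the boundary case.
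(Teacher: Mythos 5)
Your proof is correct and follows essentially the same route as the paper's: both rest on the two-scale inclusion $B_r(y)\subset B_{2r}(x)$ (you use it together with its mirror image rather than a WLOG on the sign of $Q(x)-Q(y)$), the two-sided Ahlfors bounds at radii $r$ and $2r$, and taking logarithms with $\log r<0$. The only genuine difference is that you explicitly patch the regime $2r>\diam(X)$, which the paper's proof silently skips; your enlarged constant $Q^{+}\bigl(-\log(\diam(X)/2)\bigr)$ is the right one there, though note that $-\log\rho(x,y)$ is bounded \emph{above} (not below) by $-\log(\diam(X)/2)$ in that range, which is what the argument actually requires.
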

\begin{proof} By Ahlfors regularity, there exist constants $C_1,C_2$ such that $\nu(B_r(x))\leq C_1r^{Q(x)},r^{Q(x)}\leq C_2\nu(B_r(x))$ for all $0<r\leq\diam(X),x\in X.$ Let $x,y\in X$ with $0<r:=\rho(x,y)<\frac{1}{2}.$ Say $Q(x)\geq Q(y).$ Since $Q$ is bounded, let $R<\infty$ be an upper bound for $Q$ and let $e^C:=C_1C_22^R.$ Then since $B_r(y)\subset B_{2r}(x),$ $r^{Q(y)}\leq C_2\nu(B_r(y))\leq C_2\nu(B_{2r}(x))\leq C_1C_2 2^R r^{Q(x)} = e^Cr^{Q(x)}.$ Hence $d(x,y)^{|Q(y)-Q(x)|}\geq e^{-C}.$ So $|Q(x)-Q(y)|\leq \frac{-C}{\log(\rho(x,y)}.$
\end{proof}

Hence, in particular, if $\nu$ is Ahlfors $Q$-regular then $Q$ is continuous.
A Borel measure $\nu$ is regular if for every Borel set $A$, 
\[\nu(A)=\sup\{\nu(F)\;|\;X \supset F \mbox{ closed }\}=\inf\{\nu(U)\;|\;A\subset U \mbox{ open }\}.\] We state the following classical result. We state the proof here for completeness, following \cite{van}.
\begin{lemma} A finite Borel measure on $X$ is regular. 
\end{lemma}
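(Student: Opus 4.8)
The plan is to show that any finite Borel measure $\nu$ on the metric space $X$ is regular, meaning inner-closed-regular and outer-open-regular simultaneously. The standard approach is a \emph{good-sets principle}: let $\mathscr{R}$ be the collection of all Borel sets $A$ that satisfy both regularity conditions, namely $\nu(A)=\sup\{\nu(F)\mid F\subset A,\ F\text{ closed}\}=\inf\{\nu(U)\mid A\subset U,\ U\text{ open}\}$. I would then prove that $\mathscr{R}$ is a $\sigma$-algebra containing all the closed sets; since the Borel $\sigma$-algebra is the smallest $\sigma$-algebra containing the closed (equivalently open) sets, this forces $\mathscr{R}$ to contain every Borel set, giving the result.

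First I would verify that every closed set $F$ lies in $\mathscr{R}$. Inner regularity is trivial since $F$ itself is closed. For outer regularity, I would use the metric structure: define the open $\epsilon$-neighborhoods $U_\epsilon:=\{x\in X\mid \operatorname{dist}(x,F)<\epsilon\}$, note these are open, decrease to $F$ as $\epsilon\to 0^+$ (because $F$ is closed, so $\bigcap_\epsilon U_\epsilon=F$), and invoke continuity from above of the finite measure $\nu$ to conclude $\nu(U_\epsilon)\to\nu(F)$. Finiteness of $\nu$ is exactly what licenses continuity from above here, so this is where the hypothesis enters. Establishing closure of $\mathscr{R}$ under complementation is immediate by the duality between the closed and open approximants: if $A\in\mathscr{R}$ with $F\subset A\subset U$, then $U^c\subset A^c\subset F^c$ with $U^c$ closed and $F^c$ open, and the measure estimates transfer cleanly.

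The main work is closure under countable unions, and this is the step I expect to be the principal obstacle. Given $A_n\in\mathscr{R}$ and $\epsilon>0$, for the \emph{outer} approximation I would pick open $U_n\supset A_n$ with $\nu(U_n\setminus A_n)<\epsilon/2^n$; then $U:=\bigcup_n U_n$ is open, contains $A:=\bigcup_n A_n$, and $\nu(U\setminus A)\leq\sum_n\nu(U_n\setminus A_n)<\epsilon$, which handles the open side with no finiteness trouble. For the \emph{inner} approximation the subtlety appears: I would choose closed $F_n\subset A_n$ with $\nu(A_n\setminus F_n)<\epsilon/2^n$, but $\bigcup_n F_n$ need not be closed. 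Here finiteness is essential again: since $\nu$ is finite, the partial unions $\bigcup_{n\leq N}F_n$ (which \emph{are} closed, being finite unions of closed sets) satisfy $\nu\bigl(\bigcup_{n\leq N}F_n\bigr)\to\nu\bigl(\bigcup_n F_n\bigr)$ by continuity from below, and one estimates $\nu\bigl(A\setminus\bigcup_n F_n\bigr)\leq\sum_n\nu(A_n\setminus F_n)<\epsilon$, so a sufficiently large finite union $\bigcup_{n\leq N}F_n$ is a closed set approximating $\nu(A)$ from below to within $2\epsilon$. This truncation to a finite closed subunion, justified precisely by the finiteness of $\nu$, is the crux of the argument; once it is in place, $\mathscr{R}$ is confirmed to be a $\sigma$-algebra and the proof concludes.
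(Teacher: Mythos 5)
Your proof is correct and takes essentially the same route as the paper: both run the good-sets principle on the class of Borel sets satisfying inner (closed) and outer (open) regularity, prove it is a $\sigma$-algebra via the complement duality and the finite-truncation-of-closed-unions trick, and handle closed sets by shrinking open neighborhoods $B_{1/n}(F)$ together with continuity from above of the finite measure. The only cosmetic difference is that the paper chooses the pairs $F_n\subset A_n\subset U_n$ simultaneously with $\nu(U_n\setminus F_n)<\epsilon/2^{n+1}$, while you treat the inner and outer approximations separately.
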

\begin{proof} Let $\mathscr{M}:=\{A\;|\; \sup\{\nu(F)\;|\;X \supset F \mbox{ closed }\}=\inf\{\nu(U)\;|\;A\subset U \mbox{ open }\}\}.$ Since $X$ is both open and closed, $X\in \mathscr{M}.$ Suppose $A\in \mathscr{M}.$ Then for $\epsilon>0$ if $F\subset A\subset U$ with $F$ closed, $U$ open, and $\nu(U\setminus F)<\epsilon,$ then $U^c\subset A^c \subset F^c,$ $U^c$ closed, $F^c$ open, and $\nu(F^c\setminus U^c)=\nu(U\setminus F)<\epsilon.$ It follows that $A^c \in \mathscr{M}.$ Let $(A_n)\subset \mathscr{M}.$ Then for $\epsilon>0$, for each $n$ let $F_n\subset A_n \subset U_n$ with $F_n$ closed, $U_n$ open, and $\nu(U_n\setminus F_n)<\frac{\epsilon}{2^{n+1}}.$ Then let $A=\cup A_n.$ Let $N$ large so that $\nu(\cup_{n=1}^N F_n)>\nu(\cup_n F_n)-\frac{\epsilon}{2}.$ Then let $F=\cup_{n=1}^N F_n, U=\cup U_n.$ Then $F\subset A \subset U,$ $F$ is closed, $U$ is open, and $\nu(U\setminus \cup_n F_n) = \nu(U)-\nu(\cup_n F_n)<\frac{\epsilon}{2}.$ So $\nu(U\setminus F)=\nu(U)-\nu(F)<\epsilon.$ So $A\in \mathscr{M}$. Hence $\mathscr{M}$ is a $\sigma-$algebra. Let $A\subset X$ closed. Then $\nu(A)=\sup\{\nu(F)\;|\;X \supset F \mbox{ closed }\}.$ Let $U_n=B_{\frac{1}{n}}(A).$ Then $A\subset U_n,$ each $U_n$ is open, and $\cap_n U_n = A.$ So $\inf_n \nu(U_n) = \nu(A)$ by continuity of measure, since $\nu(X)<\infty.$ Hence $\nu(A)\leq \inf\{\nu(U)\;|\;A\subset U \mbox{ open }\}\}\leq \inf_n \nu(U_n)=\nu(A).$ Hence $\mathscr{M}$ contains all Borel sets.  
\end{proof}

\begin{proposition}If $\nu$ is a finite Ahlfors $Q$-regular Borel measure on a separable metric space then $\nu\simeq \lambda^{Q_c}.$
\end{proposition}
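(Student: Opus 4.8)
The plan is to prove both inequalities of the strong equivalence directly for an arbitrary Borel set $E$, obtaining a single constant depending only on the Ahlfors constant $C$ and on $R:=\sup_{x\in X}Q(x)$, which is finite since $Q$ is assumed bounded. Two standing facts will be used repeatedly: since $\nu$ is finite it is regular (by the regularity lemma above), so it is outer regular on Borel sets; and since $X$ is separable, every pairwise disjoint family of nonempty open balls in $X$ is countable. Write $\lambda^{Q_c}_\delta(E):=\inf\{\sum_{B\in\mathscr U}|B|^{Q_c(B)}\;|\;\mathscr U\in\mathscr B_\delta(E)\}$, so that $\lambda^{Q_c}(E)=\sup_{\delta>0}\lambda^{Q_c}_\delta(E)$ and $\lambda^{Q_c}_\delta(E)\le\lambda^{Q_c}(E)$.

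For the inequality $\nu\le C\lambda^{Q_c}$ I would first note a clean pointwise bound on balls: if $B=B_r(x)$ is a nonempty open ball then every $y\in B$ satisfies $\rho(x,y)\le|B|$, so $B\subseteq B_{|B|+\eta}(x)$ for each $\eta>0$; applying Ahlfors regularity and letting $\eta\to 0^+$ gives $\nu(B)\le C|B|^{Q(x)}=C|B|^{Q_c(B)}$. Crucially this uses containment in the ball about the \emph{center} of radius equal to the diameter, so I never need the diameter of a ball to be comparable to its radius (which can fail), and hence no uniform-perfectness hypothesis is required. Then for any $\delta>0$ and any $\mathscr U\in\mathscr B_\delta(E)$, monotonicity and countable subadditivity give $\nu(E)\le\sum_{B\in\mathscr U}\nu(B)\le C\sum_{B\in\mathscr U}|B|^{Q_c(B)}$; taking the infimum over covers yields $\nu(E)\le C\lambda^{Q_c}_\delta(E)\le C\lambda^{Q_c}(E)$.

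For the reverse inequality $\lambda^{Q_c}\le C'\nu$ I would argue by a Vitali covering together with outer regularity. Fix $\epsilon>0$ and choose, by outer regularity, an open $O\supseteq E$ with $\nu(O)<\nu(E)+\epsilon$. Fix $\delta>0$. For each $x\in E$ the balls $B_r(x)$ with $0<r\le\delta/10$ and $B_{5r}(x)\subseteq O$ form a family $\mathscr F$ of balls of uniformly bounded radius covering $E$, so the basic $5r$-covering lemma (valid in any metric space when radii are bounded) produces a disjoint subfamily $\{B_{r_i}(x_i)\}_i$ --- countable by separability --- with $E\subseteq\bigcup_i B_{5r_i}(x_i)$. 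Each $B_{5r_i}(x_i)$ has diameter at most $10r_i\le\delta$, so $\{B_{5r_i}(x_i)\}_i\in\mathscr B_\delta(E)$ and, using $(10r_i)^{Q(x_i)}\le 10^R r_i^{Q(x_i)}$, the Ahlfors lower bound $r_i^{Q(x_i)}\le C\nu(B_{r_i}(x_i))$, and disjointness inside $O$,
\[
\lambda^{Q_c}_\delta(E)\le\sum_i|B_{5r_i}(x_i)|^{Q(x_i)}\le 10^R\sum_i r_i^{Q(x_i)}\le 10^R C\sum_i\nu(B_{r_i}(x_i))\le 10^R C\,\nu(O)<10^R C\,(\nu(E)+\epsilon).
\]
Since this bound is uniform in $\delta$, taking the supremum over $\delta$ and then $\epsilon\to 0^+$ gives $\lambda^{Q_c}(E)\le 10^R C\,\nu(E)$. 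Combining the two halves yields $\nu\simeq\lambda^{Q_c}$ with constant $10^R C$.

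The main obstacle is the reverse inequality, and specifically the covering step: I must ensure the $5r$-covering lemma is applicable in a possibly non-doubling separable metric space (it is, because the radii are bounded by $\delta/10$ and separability forces the selected disjoint family to be countable), that the enlarged balls genuinely form an admissible $\delta$-cover of $E$, and that the resulting estimate is uniform in $\delta$ so the passage to $\sup_\delta$ is legitimate. Care is also needed that Ahlfors regularity is always invoked on correctly centered balls --- on the ball about its own center with radius equal to its diameter in the easy direction, and on the selected disjoint balls $B_{r_i}(x_i)$ in the hard direction.
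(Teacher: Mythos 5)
Your proof is correct and follows essentially the same route as the paper's: the easy inequality via the Ahlfors upper bound applied to each covering ball re-centered at radius (essentially) its diameter, and the hard inequality via the Vitali $5r$-covering lemma on disjoint balls, the Ahlfors lower bound, and outer regularity of the finite measure $\nu$. The only differences are organizational --- the paper proves the hard inequality first for open sets and then passes to Borel sets by regularity, whereas you fold the outer-regular open set $O$ directly into the covering construction; your choice to bound the \emph{radii} (rather than the diameters) of the selected balls by $\delta/10$ is in fact slightly more careful than the paper's wording, since it guarantees outright that the enlarged balls form an admissible $\delta$-cover.
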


\begin{proof} Since $Q$ is bounded, let $R>0$ be an upper bound for $Q$ and let $C_1,C_2$ constants such that $\nu(B_r(x))\leq C_1r^{Q(x)},r^{Q(x)}\leq C_2\nu(B_r(x))$for all $x\in X$ and $0<r\leq \diam(X).$ 
Let $A\subset X$ be Borel measurable. Then for $\delta>0$ let $(B_i)_{i\in I} \in 
\mathscr{B}_\delta(A).$ Say $B_i=B_{r_i}(x_i).$ Let 
$r_i^\prime=\sup\{\rho(x_i,y)\;|\;y\in B_i\}.$ Then $B_i\subset \{y\;|\;
\rho(x_i,y)\leq r_i^\prime\}=B_{r_i'}[x].$ Note, by Ahlfors regularity, $\nu(B_{r_i'+1}(x_i))<\infty$. So by continuity of measure $\nu(B_{r_i'}[x_i])=\inf_{n\geq 1} \nu(B_{r_i'+\frac{1}{n}}(x_i))\leq C_1\inf_{n\geq 1} (r_i'+\frac{1}{n})^{Q(x_i)}=C_1r_i'^{Q(x_i)}.$ It follows that $\nu(A)\leq \sum_{i\in I}\nu(B_i)\leq 
C_1\sum_i r_i'^{Q(x_i)} \leq C_1\sum_i |B_i|^{Q(x_i)}.$ Hence $\nu(A)\leq C_1\lambda_\delta^{Q_c}
(A)\leq C_1 \lambda^{Q_c}(A).$

Let $A$ be open. Let $\delta>0.$  Since $X$ is separable, let $(B_i)_{i\in I}\in \mathscr{B}_{\frac{\delta}
{10}}(A)$ such that $\cup B_i = A.$ Then by the 
Vitali Covering Lemma, there exists a disjoint subcollection $(B_j)_{j\in J}, J\subset I$ of the 
$(B_i)_{i\in I}$ such that $A\subset \cup_{j\in J} 5B_j.$ Let $Q_j=Q(x_j),$ where $x_j$ is 
the center of $B_j.$ Let $r_j$ be the radius of $B_j.$ Then $\lambda_\delta^{Q_c}(A)\leq 
\sum_j |5B_j|^{Q_j}\leq 10^R\sum_jr_j^{Q_j} \leq 10^R C_2\sum_j 
\nu(B_j)\leq 10^R C_2\nu(A).$ Since $\delta>0$ was arbitrary, $\lambda^{Q_c}(A)\leq C_2 10^R \nu(A).$
Let $B\subset X$ Borel measurable. Since $\nu$ is regular, for $\epsilon>0,$ let $A\subset X$ 
open with $B\subset A$ such that $\nu(B)\geq \nu(A)-\epsilon.$ Then $\lambda^{Q_c}(B)\leq \lambda^{Q_c}(A)\leq 10^RC_2(\nu(B)+\epsilon).$ Since $\epsilon>0$ is arbitrary, $\lambda^{Q_c}(B)\leq 10^RC_2\nu(B).$
\end{proof}
\begin{theorem} Let $X$ be a compact metric space. Then if $\nu$ is an Ahlfors $Q$-regular Borel measure then $Q=\dim_{\loc}$ and $\nu \simeq H_{\loc}.$ 
\end{theorem}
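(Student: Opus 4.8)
The plan is to assemble this final theorem almost entirely from the machinery already established, since the hard analytic work has been distributed across the preceding propositions and lemmas. The theorem has two assertions: that $Q = \dim_{\loc}$ pointwise, and that $\nu \simeq H_{\loc}$. I would handle the dimension identity first, as it is the logically prior fact and feeds into the measure comparison.

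\textbf{Step 1: Establishing $Q = \dim_{\loc}$.} First I would observe that by Lemma 3.4 (the log-Hölder lemma for Ahlfors regular measures) together with the remark immediately following it, $Q$ is continuous. Next, the comment after the definition of Ahlfors regularity records that any Ahlfors $Q$-regular $\nu$ is $Q$-amenable. With $Q$ continuous and $X$ $Q$-amenable, Proposition 3.1 applies verbatim and yields $\dim_{\loc}(x) = Q(x)$ for all $x \in X$. So this half is a direct invocation, and I would present it in two or three sentences, citing the relevant earlier results.

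\textbf{Step 2: Establishing $\nu \simeq H_{\loc}$.} The idea is to chain together three strong equivalences. A compact metric space is separable, and $\nu$ is finite (being Ahlfors regular on a compact space, $\nu(X) = \nu(B_{\diam(X)}[x])$ is controlled), so Proposition 3.2 gives $\nu \simeq \lambda^{Q_c}$. Since $\dim(B) \le Q^+(B)$ and $\dim(B) \ge Q^-(B)$ on balls (this is exactly the content extracted inside the proof of Proposition 3.1, giving $Q^- \le \dim \le Q^+$ on $\mathscr{B}$), the ball-valued function $\tilde{Q}(B) := \dim(B)$ satisfies $Q^- \le \tilde{Q} \le Q^+$, and with this choice $\lambda^{\tilde{Q}}$ is precisely $\lambda_{\loc}$ by definition. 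Since $Q$ is log-Hölder (Step 1), Lemma 3.3 gives $\lambda^{Q_c} \simeq \lambda^{\tilde{Q}} = \lambda_{\loc}$ — here I must also note $Q^- \le Q_c \le Q^+$ so that $\lambda^{Q_c}$ is itself one of the measures covered by Lemma 3.3. Finally, since $X$ is compact, $\dim(X) < \infty$ (by Proposition 2.2, the supremum of $\dim_{\loc} = Q$ is attained and $Q$ is bounded), so Lemma 2.4 gives $H_{\loc} \simeq \lambda_{\loc}$. Composing $\nu \simeq \lambda^{Q_c} \simeq \lambda_{\loc} \simeq H_{\loc}$ and using that strong equivalence is transitive completes the proof.

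\textbf{The main obstacle} is not any single estimate but the bookkeeping of hypotheses, and in particular verifying that every cited result genuinely applies. The delicate point is confirming that $\lambda_{\loc}$ coincides with $\lambda^{\tilde{Q}}$ for the specific ball-function $\tilde{Q}(B_r(x)) = \dim(B_r(x))$: the local open spherical measure $\lambda_{\loc}$ was defined via $\tau(U) = |U|^{\dim(U)}$ restricted to balls, which is exactly $\tau(B) = |B|^{\tilde{Q}(B)}$, so the identification is immediate once stated — but it is the hinge that lets Lemma 3.3 do its work, and I would state it explicitly rather than leave it implicit. I would also take care to record why $\nu$ is finite and why $\dim(X) < \infty$, since Proposition 3.2 and Lemma 2.4 respectively require these, and compactness is what delivers both.
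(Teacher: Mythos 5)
Your proposal is correct and takes essentially the same approach as the paper: the identical chain $\nu \simeq \lambda^{Q_c} \simeq \lambda_{\loc} \simeq H_{\loc}$ (via the finite-Ahlfors proposition, the log-H\"older lemma applied to $\tilde{Q}(B)=\dim(B)$, and $\lambda_{\loc}\simeq H_{\loc}$), together with $Q=\dim_{\loc}$ from the $Q$-amenability proposition. The only cosmetic differences are that the paper obtains $Q$-amenability from $\nu\simeq\lambda^{Q_c}$ rather than from the remark following the definition of Ahlfors regularity, and it re-derives $Q^-\leq \dim \leq Q^+$ on balls via compactness of closures and the attained supremum of $\dim_{\loc}$ instead of citing the inequality established inside the proof of the amenability proposition, as you do; both are valid, and your explicit checks that $\dim(X)<\infty$ and that $\lambda_{\loc}=\lambda^{\tilde{Q}}$ make hypotheses precise that the paper leaves implicit.
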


\begin{proof} Since $X$ is compact, $\nu$ is finite. Hence by the previous proposition $\nu\simeq \lambda^{Q_c}.$ Hence $X$ is $Q$ amenable and $Q$ is continuous, as it is log-H\"older continuous. So $Q=\dim_{\loc}.$ Let $B$ be a non-empty open ball. Then clearly $Q^-(B) \leq \dim(B).$ By continuity $Q^+(B)=Q^+(\bar{B}),$ where $\bar{B}$ is the closure of $B.$ By compactness, $\bar{B}$ is compact. Hence, as we have shown, $\dim(\bar{B})=\sup_{x\in \bar{B}} \dim_{\loc}(x) = Q^+(\bar{B})=Q^+(B).$ Therefore $Q^-\leq \dim(B) \leq Q^+$ on $\mathscr{B}.$  Hence, by log-H\"older continuity, $\lambda^{Q_c}\simeq \lambda_{\loc}.$ Finally, since $\lambda_{\loc}\simeq H_{\loc}$ and since $\simeq$ is transitive, we have  $\nu\simeq H_{\loc}.$
\end{proof}

Hence if a compact space admits an Ahlfors $Q$-regular Borel measure then that measure is strongly equivalent to the local measure.

\bibliographystyle{amsplain}
\bibliography{bib}

\end{document}